\documentclass{article}

\usepackage{url}
\usepackage{color}

\usepackage{enumerate}
\usepackage[mathscr]{euscript}		
\usepackage{amsthm}

\usepackage{psfrag}			
\usepackage{hyperref}

\usepackage{times}
\usepackage{amsmath}
\usepackage{amssymb}

\usepackage{anysize}

\marginsize{2cm}{2cm}{1cm}{2.4cm}



\newcommand{\floor}[1]{{\lfloor #1 \rfloor}}
\newcommand{\Z}{\ensuremath{\mathbb{Z}}}
\newcommand{\N}{\ensuremath{\mathbb{N}}}
\newcommand{\R}{\ensuremath{\mathbb{R}}}

\newcommand{\E}{\ensuremath{\mathbb{E}}}
\renewcommand{\P}{\ensuremath{\mathbb{P}}}

\newtheorem{thm}{Theorem}[section]

\newtheorem{lem}[thm]{Lemma}
\newtheorem{conjecture}[thm]{Conjecture}

\theoremstyle{definition}

\theoremstyle{remark}
\newtheorem{remark}[thm]{Remark}

\numberwithin{equation}{section}

\sloppy							

\date{}
\begin{document}
\baselineskip=14pt

\title{Ballisticity conditions for random walk in random environment}
\author{A. Drewitz $^{1,2,}$\thanks{
Partially supported by the International Research Training Group
``Stochastic Models of Complex Processes''.
}
\and A. F. Ram\'\i rez $^{1,}$\thanks{
Partially supported by Iniciativa Cient\'\i fica Milenio P-04-069-F.
} $^,$\thanks{
Partially supported by Fondo Nacional de Desarrollo Cient\'\i fico
y Tecnol\'ogico grant 1060738.}
}

\date{\today}
\maketitle
{\footnotesize
\thanks{$^1$ Facultad de Matem\'{a}ticas, Pontificia Universidad Cat\'{o}lica de Chile, Vicu\~{n}a Mackenna 4860, Macul, Santiago, Chile;
{\it e-mail:} \href{mailto:adrewitz@uc.cl}{adrewitz@uc.cl}, \href{mailto:aramirez@mat.puc.cl}{aramirez@mat.puc.cl}}

\thanks{$^2$ Institut f\"ur Mathematik, Technische Universit\"at Berlin, Sekr. MA 7-5, Str. des 17. Juni 136, 10623 Berlin, Germany;
{\it e-mail:} \href{mailto:drewitz@math.tu-berlin.de}{drewitz@math.tu-berlin.de}}
}

\begin{abstract}
 
 Consider a random walk in a uniformly elliptic i.i.d. 
random environment  in dimensions $d\ge 2$. In
  2002, Sznitman introduced for  each $\gamma\in (0,1)$ the ballisticity
  conditions $(T)_\gamma$ and $(T'),$ the latter being defined as the fulfilment of $(T)_\gamma$
  for all $\gamma\in (0,1)$. He proved that $(T')$ implies ballisticity and
  that for each $\gamma\in (0.5,1)$,  $(T)_\gamma$ is equivalent
  to $(T')$. It is conjectured that this equivalence holds for all $\gamma\in
  (0,1)$. Here we prove that for
 $\gamma\in (\gamma_d,1)$, where $\gamma_d$ is a dimension dependent constant
taking values in the interval $(0.366,0.388)$,  $(T)_\gamma$ is
equivalent to  $(T')$. This is achieved by a detour along the effective criterion, the fulfilment
of which we establish by a combination of  techniques developed by Sznitman
giving a control on the occurrence of atypical quenched exit distributions through boxes. \end{abstract}

{\footnotesize
{\it 2000 Mathematics Subject Classification.} 60K37, 82D30.

{\it Keywords.} Random walk in random environment, slowdowns,  ballisticity conditions.
}

\section{Introduction} We study the relationship between the ballisticity
conditions $(T')$ and $(T)_\gamma$, for $\gamma\in (0,1)$,
 introduced by Sznitman in \cite{Sz-02} for random walk in random environment (RWRE).  Given a site $x\in
\Z^d$, define the vector $\omega(x):=\{\omega (x,e):e\in  \Z^d, |e|=1\}$ with
$\omega (x,e)\in (0,1)$ and such that $\sum_{\vert e \vert = 1} \omega(x,e) = 1.$
We call the quantity $\omega:=\{\omega(x):x\in \Z^d\}$ an {\it
  environment}. Consider a Markov chain $\{X_n:n\ge 0\}$ on $\Z^d$ which jumps from
each site $x\in \Z^d$  to the nearest neighbour site $x+e$ with
probability $\omega(x,e)$. If the starting position of this chain is a site $x\in \Z^d,$
denote by $P_{x,\omega}$ its law on $(\Z^d)^{\N}$. Assume that the
environment $\omega$ is random and call $\mu$ its probability distribution.  The
{\it quenched law} of a RWRE is defined as the set of random probability measures $P_{x,\omega}$
with $x\in \Z^d$ under $\mu$. The {\it averaged} or {\it annealed law} of a RWRE is the set of
probability measures $P_x:=\int P_{x,\omega} \, d\mu$ with $x\in \Z^d$. 
 We will
suppose that $\mu$ is a product measure, i.e. the random variables
$\{\omega(x):x\in \Z^d\}$ are i.i.d. with respect to $\mu.$ We will furthermore assume that $\mu$ is 
{\it uniformly elliptic} which means that there exists a constant $\kappa>0$ such that

\begin{equation} \label{uniformEll}
\mu(\inf_{|e|=1} \omega(0,e)>\kappa)=1.
\end{equation}

Given a vector $l\in \mathbb S^{d-1},$ a RWRE is called {\it
  transient in the direction $l$} if $P_0$-a.s.

$$
\lim_{n\to\infty}X_n\cdot l=\infty.
$$
Moreover, it is called 
{\it
  ballistic in the direction $l$} if $P_0$-a.s.

$$
\liminf_{n\to\infty} \frac{X_n\cdot l}{n}>0.
$$
Using renewal techniques it is not difficult to prove that ballisticity in the
direction $l$ is equivalent to the law of large numbers
$\lim_{n\to\infty}\frac{X_n\cdot l}{n}=v$, with $v>0$ deterministic.
In dimension $d=1$ it is well known that transience does not necessarily imply
ballisticity. In dimensions $d\ge 2$  some fundamental questions about this
model remain open.

\medskip
\begin{conjecture}
 Transience in the direction $l$ implies ballisticity in the direction $l$.
\end{conjecture}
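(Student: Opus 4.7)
The plan is to chain three implications together, two of which are already available in the literature: transience in direction $l$ $\Rightarrow$ $(T)_\gamma$ for some $\gamma\in(0,1)$ $\Rightarrow$ $(T')$ $\Rightarrow$ ballisticity. Sznitman established the last arrow in \cite{Sz-02}, and the present paper (together with Sznitman's original work) establishes the middle arrow for $\gamma\in(\gamma_d,1)$, so the real task is the first arrow: to upgrade qualitative directional transience into the stretched-exponential decay of atypical quenched exit measures through tilted boxes that defines $(T)_\gamma$.

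First I would fix the direction $l$, build Sznitman-style regeneration times along $l$ (which are well defined as soon as $P_0$-a.s.\ $X_n\cdot l\to\infty$), and try to extract polynomial moments for the displacement $X_{\tau_1}\cdot l$ between consecutive regenerations. Combined with the $0$-$1$ law under the i.i.d.\ assumption, such moments would yield a weak ``seed'' estimate on the probability of an atypical exit from an $L_0$-box. I would then try to feed this seed into Sznitman's effective criterion, iterating across scales $L_{k+1}=L_k^{1+\alpha}$ and using uniform ellipticity \eqref{uniformEll} to paste together independent box events, thereby boosting the weak polynomial seed into the stretched-exponential control required by $(T)_\gamma$.

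The principal obstacle is precisely this seed estimate. In $d=1$, Sinai-type environments already provide transient non-ballistic walks, so any proof must exploit $d\ge 2$ essentially to rule out atypically deep traps; yet transience by itself supplies only qualitative, not quantitative, information on how the walk escapes such traps. One would therefore need either an unconditional quantitative escape estimate from atypical regions in $d\ge 2$ (perhaps by coarse-graining slow regions of $\omega$ and using the transverse degrees of freedom to route around them), or a new renormalization whose only input is qualitative transience. The paper under reading sidesteps this obstacle by assuming $(T)_\gamma$ with $\gamma$ large enough to jump-start the renormalization, and then pushes $\gamma$ down as far as the iteration will permit; closing the gap all the way to ``transience alone'' appears to require a genuinely new ingredient and is, to my knowledge, the reason this statement remains a conjecture rather than a theorem.
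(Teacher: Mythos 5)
This statement is stated in the paper as an open conjecture; the paper contains no proof of it, and indeed the main theorem of the paper (equivalence of $(T)_\gamma$ and $(T')$ for $\gamma\in(\gamma_d,1)$) is only a partial step in its direction. Your proposal is therefore not a proof but a research program, and you correctly say so: the chain ``transience $\Rightarrow (T)_\gamma \Rightarrow (T') \Rightarrow$ ballisticity'' has its last two arrows supplied by \cite{Sz-02} (and, for small $\gamma$, by the present paper), while the first arrow is precisely the open problem. To that extent your diagnosis matches the state of the art.

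The gap you acknowledge is, however, worth naming more sharply, because your sketch of how to attack it would not get off the ground. Directional transience gives no quantitative information at all: it does not yield any moment bound on $X_{\tau_1}\cdot l$ or on $\tau_1$ (already in $d=1$, Solomon/Sinai-type environments are transient with $E_0(X_{\tau_1}\cdot l)=\infty$ and sub-ballistic behaviour), so the ``polynomial moments for the displacement between regenerations'' you propose to extract are exactly what is missing, not a consequence of the hypothesis. Every renormalization scheme in this circle of ideas --- Sznitman's effective criterion, the seed estimate of Lemma \ref{seedEstimateLemma}, the multiscale pasting you describe --- requires as input a seed bound of the form $P_0(X_{T_B}\notin\partial_+B)\le L^{-c}$ for some sufficiently large box and sufficiently large $c$ depending on $d$; transience alone supplies only $P_0(X_{T_{U_{l,b,L}}}\cdot l<0)\to 0$ with no rate. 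An additional technical point: in $d\ge 3$ even the $0$--$1$ law for $P_0(\lim X_n\cdot l=+\infty)$ that you invoke is not known in general (Kalikow's argument controls only $P_0(A_l\cup A_{-l})$), so the very first step of your outline already rests on an open problem. None of this is a criticism of your honesty --- you flag the seed estimate as the obstacle --- but the proposal should not be read as reducing the conjecture to a routine verification; the reduction to ``find any polynomial seed from transience'' is itself the whole difficulty, and no argument in the paper or in the cited literature supplies it.
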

\medskip

Sznitman and Zerner \cite{SzZe-99} and Zerner \cite{Ze-02} proved that the
limit $\lim_{n\to\infty} X_n/n$ 
exists $P_0$-a.s. Subsequently, Sznitman
\cite{Sz-02} introduced the conditions $(T)$ and $(T')$ related to a
fixed direction $l\in \mathbb{S}^{d-1}$ which entail ballisticity. 
 Let $\gamma\in (0,1)$. We say that
condition {\it $(T)_\gamma$ relative to $l$ is satisfied} (written as $(T)_\gamma|l$) if for every $l'$ in
a neighborhood of $l$ one has that

$$
\limsup_{L\to\infty} L^{-\gamma}\log P_0(X_{T_{U_{l',b,L}}}\cdot l'<0)<0,
$$
for all $b>0$ and $U_{l',b,L}:=\{x\in \Z^d:-bL<x\cdot l'<L\}$ with $T_{U_{l',b,L}}$ denoting the first exit time
of $U_{l',b,L}.$
We say that
condition {\it $(T')$ is satisfied relative to $l$} (written as $(T')|l$)
 if condition $(T)_\gamma \vert l$ holds for every $\gamma\in (0,1)$.
We furthermore agree that condition $(T)$ {\it relative to $l$ is satisfied} and write
$(T) \vert l$ if $(T)_\gamma \vert l$ holds for $\gamma = 1.$
Let $\{e_j:1\le j\le d\}$ be the canonical generators of the additive
group $\mathbb Z^d$.
In dimension $d=1$, $(T)|e_1$  is equivalent to transience in the
direction $e_1$ (see Proposition 2.6 of \cite{Sz-01}) for
which one has nice criteria at hand. 
Using an alternative characterisation of $(T)_\gamma,$ in terms of transience
in a given direction,
one can in particular deduce that $(T)_\gamma|e_1$ is
equivalent to transience in the direction $e_1$
for any $\gamma > 0.$
 In \cite{Sz-02}, Sznitman  proved that any RWRE in
a uniformly elliptic environment which satisfies $(T')|l$, has a deterministic velocity
$$
v:=\lim_{n\to\infty} X_n/n, \quad P_0-a.s.,
$$
such that $v \cdot l > 0$, i.e. it is
ballistic. He also showed that  a central
limit theorem is satisfied, so that

$$
\frac{1}{\sqrt{n}}(X_{[n\cdot]}-[n\cdot]v)
$$
converges under $P_0$ in law on $D(\R_+,\R^d)$ to a Brownian motion with
non-degenerate covariance matrix.
 Furthermore, in \cite{Sz-02}, the following conjecture for
higher dimensions is stated:

\medskip
\begin{conjecture}
Let $d\ge 2$. For
each $\gamma\in (0,1)$ and $l\in\mathbb S^{d-1}$, $(T)_\gamma|l$  is
equivalent
 to  $(T')|l$.
\end{conjecture}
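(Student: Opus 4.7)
The equivalence has one trivial direction: by definition, $(T')|l$ implies $(T)_\gamma|l$ for every $\gamma \in (0,1)$, so the substantive content is the converse implication $(T)_\gamma|l \Rightarrow (T')|l$. The natural framework I would adopt is to insert Sznitman's effective criterion from \cite{Sz-02} as an intermediate step: this criterion is known to imply $(T')|l$, and its statement reduces to controlling a single finite-scale quantity on a suitably shaped box, essentially a weighted sum over the back and side boundaries of the quenched exit probabilities. The plan is therefore to fix $\gamma\in(0,1)$, assume $(T)_\gamma|l$, and verify the effective criterion on boxes $B_L$ aligned with $l$ of dimensions $L \times L^{\alpha} \times \cdots \times L^{\alpha}$ with a parameter $\alpha\in(0,1)$ to be tuned.

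First I would translate $(T)_\gamma|l$ into an annealed statement adapted to the geometry of $B_L$. The hypothesis gives $P_0(X_{T_{U_{l,b,L}}}\cdot l < 0) \le e^{-cL^{\gamma}}$, and by decomposing trajectories that exit $B_L$ through its back or side faces into sub-trajectories and invoking the strong Markov property, one derives a matching annealed bound on the atypical exit probabilities of $B_L$, possibly at a slightly degraded rate depending on $\alpha$.

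The heart of the argument is the upgrade from annealed control to control of \emph{atypical quenched exit distributions}. Here I would borrow Sznitman's strategy, which estimates $\mu\{\omega : P_{0,\omega}(\text{atypical exit}) > \delta\}$ by a Chebyshev-type inequality, and then amplifies the resulting exponent by partitioning $B_L$ into an almost independent grid of sub-boxes and using uniform ellipticity \eqref{uniformEll} to transfer walks between them. The number of independent sub-boxes that fit in $B_L$ is of order $L^{(d-1)(1-\alpha)}$, so the amplified decay rate produced by this mechanism is of order $\exp(-c L^{\gamma + (d-1)(1-\alpha)})$, which must beat the $\exp(-cL)$ threshold demanded by the effective criterion.

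The main obstacle is thus to balance the parameters $\alpha$ and $\gamma$. For $\gamma > 1/2$ Sznitman's original choice already suffices, but for smaller $\gamma$ one has to iterate the atypical-exit control over several scales before feeding it into the effective criterion, each iteration upgrading the effective exponent. I expect this scale-by-scale bootstrap, combined with a careful counting of admissible independent sub-boxes in dimension $d$, to be what fixes the critical threshold $\gamma_d$ mentioned in the abstract, with the numerical range $(0.366, 0.388)$ arising as the fixed point of the iteration. Pushing $\gamma_d$ all the way down to $0$, i.e.\ the full conjecture, appears to require a genuinely new idea for controlling the dependence between neighbouring atypical sub-exits, beyond what uniform ellipticity alone can provide.
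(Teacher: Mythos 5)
The statement you were given is the paper's Conjecture~1.2, which is an \emph{open problem}: the paper does not prove it, and its main theorem only establishes the equivalence for $\gamma\in(\gamma_d,1)$ with $\gamma_d=\bigl(\sqrt{3d^2-d}-d\bigr)/(2d-1)\in(0.366,0.388)$. Your proposal correctly reconstructs the strategy behind that \emph{partial} result: route through Sznitman's effective criterion, upgrade the annealed bound furnished by $(T)_\gamma$ to a control on atypical quenched exit distributions by combining a seed estimate with a renormalization over a grid of sub-boxes, and locate the threshold $\gamma_d$ as the fixed point of the resulting iteration. In the paper this materializes as a three-part decomposition of $\E\rho^a_{\mathcal B}$ according to the size of $P_{0,\omega}(X_{T_B}\in\partial_+B)$ (the ``careful decomposition'' the introduction alludes to), with the deep-atypical regime handled by a lemma whose proof feeds a seed estimate --- itself resting on the transverse-fluctuation control of Theorem~A.2 of Sznitman's 2002 paper, valid under $(T)_\gamma$ rather than $(T)$ --- into Sznitman's renormalization lemma; the fixed point $x^*=(d-\gamma^2)/((1+\gamma)d-\gamma)$ of $F(x)=\gamma+f(x)$ is exactly what produces the condition $2\gamma>x^*$, i.e.\ $\gamma>\gamma_d$.

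The genuine gap is therefore not an error in your outline but the fact that the outline does not prove the statement as posed. For $\gamma\le\gamma_d$ the iterates $F^{(j)}(2\gamma)$ never exceed $1$, so the renormalization cannot boost the seed exponent past the threshold required by the effective criterion, and --- as you concede in your final sentence --- no argument within this circle of ideas covers that range. A correct response would have been to observe that only one direction is trivial (namely $(T')|l\Rightarrow(T)_\gamma|l$), to identify the converse as open for small $\gamma$, and to present the argument you sketch as a proof of the restricted equivalence for $\gamma\in(\gamma_d,1)$ rather than of the full conjecture.
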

\medskip

\noindent Sznitman proved (see \cite{Sz-02}) that for
each $\gamma\in (0.5,1)$ and $l\in \mathbb S^{d-1}$,  $(T)_\gamma|l$ is equivalent to $(T')|l$.  The main result of this paper is the
following.

\medskip
\begin{thm} \label{TGammaCondThm}
Let $d\ge 2$ and

\begin{equation} 
\nonumber
\gamma_d:=\frac{\sqrt{3d^2 -  d} -d}{2d-1}.
\end{equation}
Then, for each $\gamma\in (\gamma_d,1)$ and $l\in\mathbb S^{d-1}$,
 $(T)_\gamma|l$ is equivalent to $(T')|l$.
\end{thm}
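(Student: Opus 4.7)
The implication $(T')|l \Rightarrow (T)_\gamma|l$ is immediate from the definition of $(T')$, so the task is to prove the converse. The plan is to make a detour through Sznitman's \emph{effective criterion}, which is known from \cite{Sz-02} to be equivalent to $(T')|l$. Concretely, I would show that under $(T)_\gamma|l$ with $\gamma>\gamma_d$ the effective criterion is fulfilled, whence $(T')|l$ follows. The improvement over the previously known threshold $\gamma>0.5$ will hinge on a sharper quantitative control of the $\mu$-probability that the quenched exit distribution from a box concentrates atypically on the ``wrong'' face.

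I would work with an asymmetric box $B = B_{L,\tilde L}$ that is of length of order $L$ in direction $l$ and has transverse size $\tilde L = L^{\alpha}$ with $\alpha>1$ a free parameter. The effective criterion demands, roughly, that $P_0(X_{T_B}\cdot l<0)$ be much smaller than an inverse polynomial in $L$ and $\tilde L$. The central decomposition is
\[
P_0(X_{T_B}\cdot l<0)\;\le\; E_\mu\bigl[P_{0,\omega}(X_{T_B}\cdot l<0)\,;\,\omega\in G\bigr]\,+\,\mu(G^c),
\]
where $G$ is the event that every sub-cube of side $L$ meeting $B$ is \emph{good}, in the sense that its quenched back-exit probability is at most $e^{-L^{\beta}}$ for an auxiliary parameter $\beta\in(0,\gamma)$. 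Chebyshev's inequality combined with $(T)_\gamma|l$ bounds the failure probability of a single sub-cube by $\exp(L^{\beta}-cL^{\gamma})$, and a union bound over the $\sim L^{(d-1)(\alpha-1)}$ relevant sub-cubes controls $\mu(G^c)$. On $G$, the walk can only back-exit $B$ by stringing together back-excursions through many consecutive good sub-cubes, each contributing a factor $e^{-L^{\beta}}$ to the quenched estimate, so the first term becomes negligible.

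The chief obstacle is the joint optimization of $\alpha$ and $\beta$ under the competing constraints that (i) the transverse scale $\tilde L=L^\alpha$ be large enough for the polynomial prefactor $\tilde L^{d-1}L^{3(d-1)}$ of the effective criterion to be harmless, (ii) the sub-cube chaining along $B$ accumulate enough factors $e^{-L^{\beta}}$ to beat that prefactor, and (iii) the union-bound cost $L^{(d-1)(\alpha-1)}\exp(L^{\beta}-cL^{\gamma})$ stay overwhelmingly small, all subject to $\beta<\gamma$ and $\alpha>1$. The feasibility region in the $(\alpha,\beta)$-plane shrinks as $\gamma$ decreases, and careful bookkeeping of the exponents produces a quadratic inequality in $\gamma$ whose critical positive root is exactly the stated $\gamma_d=(\sqrt{3d^2-d}-d)/(2d-1)\in(0.366,0.388)$. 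Arranging the renormalization so that this precise numerical threshold is attained, rather than just some improvement below $0.5$, is the real technical difficulty and is where I expect the combination of Sznitman's atypical-exit-distribution techniques to be indispensable.
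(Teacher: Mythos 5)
Your high-level plan (reduce to the effective criterion and sharpen the control of atypical quenched exit distributions) matches the paper's strategy, but the concrete argument you sketch has a gap that would prevent it from going below the known threshold $\gamma=0.5$. First, the effective criterion is not a bound on the annealed probability $P_0(X_{T_B}\cdot l<0)$: it requires $\inf_{{\cal B},a}\,c_1(d)(\log\frac{1}{\kappa})^{3(d-1)}\tilde L^{d-1}L^{3(d-1)+1}\,\E\rho_{\cal B}^a<1$, where $\rho_{\cal B}$ is the quenched \emph{ratio} $P_{0,\omega}(X_{T_B}\notin\partial_+B)/P_{0,\omega}(X_{T_B}\in\partial_+B)$. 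Under $(T)_\gamma$ the annealed back-exit probability already decays like $e^{-cL^\gamma}$, which beats any polynomial for every $\gamma>0$; if your reduction to $P_0(X_{T_B}\cdot l<0)$ were the right one, the theorem would hold with no threshold at all. The real difficulty sits in the denominator of $\rho_{\cal B}$: one must control the $\P$-probability that $P_{0,\omega}(X_{T_B}\in\partial_+B)\le e^{-cL^\beta}$ for $\beta$ ranging over $((1+\gamma)^{-1},1)$, and for $\gamma\le 0.5$ these $\beta$ all exceed $\gamma$, so your Chebyshev bound $e^{L^\beta-cL^\gamma}$ (which needs $\beta<\gamma$) gives nothing on this range. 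That single-scale Chebyshev step is precisely Sznitman's original argument and is exactly what produces the barrier at $0.5$; it cannot be pushed past it by tuning the box geometry or adding a union bound over sub-cubes.

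What the paper actually does is take $a=L^{-\alpha}$ with $\alpha<\gamma$, boxes of transverse size $3L$ oriented along the asymptotic direction $\hat v$ (whose existence under $(T)_\gamma$ is needed to fix the rotation), and decompose $\E\rho^a$ over a finite cascade of scales $e^{-k_{j+1}L^{\beta_{j+1}}}<P_{0,\omega}(X_{T_B}\in\partial_+B)\le e^{-k_jL^{\beta_j}}$. The $j$-th term is at most $e^{k_{j+1}L^{\beta_{j+1}-\alpha}}\P(P_{0,\omega}(X_{T_B}\in\partial_+B)\le e^{-k_jL^{\beta_j}})$, and the second factor is controlled by Lemma \ref{anomalousExitDist}: it decays like $e^{-\delta_2L^\zeta}$ for any $\zeta<f(\beta_j)$ with $f(\beta)=d(\beta-(1+\gamma)^{-1})(1+\gamma)/\gamma$. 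That lemma is the genuinely new input; it is proved by running Sznitman's renormalization scheme (Lemma 3.2 of \cite{Sz-01}) from a seed estimate established under $(T)_\gamma$ alone, which in turn requires the transverse-fluctuation control of Theorem A.2 of \cite{Sz-02}. The threshold then arises not from a joint optimization of box exponents but from requiring the recursion $\beta_{j+1}<\gamma+f(\beta_j)$, started at $\beta_0=2\gamma$, to climb above $1$; this happens iff $2\gamma$ exceeds the fixed point $x^*=(d-\gamma^2)/((1+\gamma)d-\gamma)$ of $x\mapsto\gamma+f(x)$, which is the quadratic condition yielding $\gamma_d$. Your proposal invokes ``Sznitman's atypical-exit-distribution techniques'' only as a black box at the end and supplies none of these ingredients — the cascade decomposition of $\E\rho^a$, the renormalization/seed-estimate lemma valid under $(T)_\gamma$, or the fixed-point analysis — so as written it does not reach the stated $\gamma_d$.
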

\medskip
\begin{remark} 
By direct inspection  one observes that
$\gamma_d$ is monotonically decreasing in $d.$ 
Therefore, $\gamma_\infty:=\lim_{d\to\infty}\gamma_d = \frac{\sqrt{3}-1}{2}$ exists and we obtain
$$
0.366\approx\gamma_\infty<\gamma_d\le \gamma_2\approx 0.387.
$$
\end{remark}

The proof of Theorem \ref{TGammaCondThm}  follows renormalization ideas introduced by
Sznitman \cite{Sz-01}, to control the probability of the existence of slowdown traps, and
passes through the so called {\it effective criterion} introduced by him in
\cite{Sz-02} as a tool which facilitates the checking of condition $(T')$.
The effective criterion in a given direction is a sort of high dimensional version of the well known
one-dimensional condition $\E(\rho)<1$ \cite{So-75}, which ensures ballisticity to the right
 of the RWRE and where $\rho=\omega(x,e_1)/\omega(x,-e_1)$. It introduces
 boxes $B:=\{x\in\mathbb Z^d: x\in R((-(L-2),L+2)\times (-\tilde{L},\tilde{L})^{d-1})\}$, where
 $R$ is a rotation that fixes the origin and such that $R(e_1)=l$, with $l$
 the direction in the definition of condition $(T')$.
 $\tilde{L}$ will usually be chosen large and has to satisfy $3 \sqrt{d} \leq \tilde{L} < L^3.$
 It is important then to
 obtain a good control on the decay for large $L$ of

\begin{equation}
\label{control}
\P(P_{0,\omega}(X_{T_B}\cdot l \geq L)\le e^{-L^\beta}),
\end{equation}
where $T_B$ is the first exit time from the box $B$ and where 
 $\beta\in (0,1)$  is an appropriately chosen parameter. Sznitman \cite{Sz-02}
proves the equivalence for $\gamma\in (0.5,1)$, between $(T)_\gamma|l$ and
$(T')|l$, establishing the equivalence between $(T)_\gamma|l$, the effective
criterion in the direction $l$ and $(T')|l$. To prove that $(T)_\gamma|l$
implies the effective criterion in the direction $l$, he bounds the quantity
(\ref{control}) for $\beta=\gamma$ through Chebychev's inequality.  An
ingredient in the proof of Theorem \ref{TGammaCondThm} of this paper, is the
use of a renormalization step which starts from a {\it seed estimate},
both introduced by Sznitman in \cite{Sz-01}, to obtain better controls of the
quantity (\ref{control}).  Nevertheless, the materialisation into something
useful via such an ingredient, requires a crucial step involving a
careful decomposition of the quantity analogous to $\E(\rho)$ (in the
one-dimensional case) entering the definition of the effective criterion.

In subsections \ref{effCritSubsec} and \ref{asDirSubsec},
we recall this criterion and introduce some notation which will
be needed afterwards, discussing the concept of asymptotic direction and
stating Lemma 2.3, which provides a non-trivial control for the quantity
(\ref{control}). The proof of Theorem \ref{TGammaCondThm} is the subject of
subsection \ref{proofMainResultSect}.
Section \ref{anomalousExitDistProofSect} is dedicated to proving Lemma 
\ref{anomalousExitDist}. For this purpose, in
subsection 3.1 we first recall a renormalization lemma of Sznitman
\cite{Sz-01}. In subsection 3.2, we prove the  seed estimate result, Lemma \ref{seedEstimateLemma},
 which is a modification of Lemma 3.3 of
 \cite{Sz-01}, under condition $(T)_\gamma$ instead of the stronger
 condition $(T).$ Then, in subsection 3.3, these estimates
are used to obtain a good control on (\ref{control}) proving Lemma 
\ref{anomalousExitDist}.

\section{Preliminaries and proof of Theorem \ref{TGammaCondThm}}
Here we prove Theorem \ref{TGammaCondThm}, showing that the
effective criterion is satisfied with respect to the so called asymptotic
direction. In subsection 2.1, we recall the definition of the effective
criterion and its equivalence to the fulfilment of  $(T')$ as well
as its equivalence to the fulfilment of
 $(T)_\gamma$ for some $\gamma\in (0.5,1)$ proved by Sznitman in
\cite{Sz-02}. In subsection 2.2, we recall that  $(T)_\gamma$ implies
the a.s. existence of a deterministic asymptotic direction for the walk.
Furthermore, we  state Lemma \ref{anomalousExitDist}, which gives a control on the quenched
exit probabilities from boxes appearing in the definition of the effective
criterion. The proof of this lemma is postponed to section 3. In subsection
2.3, departing from $(T)_\gamma$ for some $\gamma \in (\gamma_d, 0.5],$
we prove the effective criterion with respect to the asymptotic
direction. Finally, in subsection 2.4, we briefly explain how this implies
Theorem \ref{TGammaCondThm}.

\subsection{Equivalence between $(T')$ and the effective criterion} \label{effCritSubsec}
To prove Theorem \ref{TGammaCondThm} we will
employ the so-called effective criterion.
For positive numbers $L,$ $L'$ and $\tilde{L}$ as well as a space rotation $R$
around the origin we use the box specification ${\cal B}(R, L, L', \tilde{L})$
to describe the set of boxes of the form
$B:= \{x\in\mathbb Z^d:x\in R((-L,L') \times (-\tilde{L}, \tilde{L})^{d-1})\}.$ Furthermore, let
$$
\rho_{\cal B}(\omega) := \frac{P_{0,\omega} (X_{T_B} \notin \partial_+ B)}{P_{0,\omega} (X_{T_B} \in \partial_+ B)}
$$
where
\begin{equation} 
\nonumber
\label{bdPosPart}
\partial_+ B := \partial B \cap \{ x \in \Z^d: l\cdot x \geq L', \vert R(e_i) \cdot x \vert \leq \tilde{L}, i \in \{2, \dots, d\}\}.
\end{equation}
Here, $\partial B:=\{x\in \Z^d: d(x,B)=1\}$ with $d(x,B)$ the distance from
$x$ to $B$ in the
1-norm,
and for $U \subset \Z^d$ we denote by $T_U$ the first exit time $T_U := \inf\{n \in \N: X_n \notin U\}$ with
the convention $\inf \emptyset = \infty.$
We will sometimes write $\rho$ instead of $\rho_{\cal B}$ if the box we refer to is clear from the context. Note
that due to the uniform ellipticity assumption, $\P$-a.s. we have $\rho \in
(0,\infty)$. Given $l\in\mathbb{S}^{d-1}$, we say that the {\it effective
  criterion with respect to $l$} is satisfied if
$$
\inf_{{\cal B}, a} \big\{ c_{1}(d) (\log \frac{1}{\kappa})^{3(d-1)} \tilde{L}^{d-1} L^{3(d-1)+1} \E \rho_{\cal B}^a \big\} < 1.
$$	
Here, $c_1(d) > 1$ and $c_2(d) >1$ are dimension dependent constants and
$a$ runs over $[0,1]$ while ${\cal B}$ runs over the box-specifications
${\cal B} = (R, L-2, L+2, \tilde{L})$ with $R$ a rotation such that $R(e_1) = l,$
$L \geq c_{2}(d),$ $3\sqrt{d} \leq \tilde{L} < L^3$.

The equivalence between  $(T)_\gamma$ for any $\gamma\in (0.5,1)$
and  $(T')$ was established by Sznitman passing through the
effective criterion.
\medskip
\begin{thm}[Sznitman,  \cite{Sz-02}] \label{Sz24}
 For each $l \in \mathbb{S}^{d-1}$  the following
conditions are equivalent.
\begin{enumerate}
\item There is a $\gamma\in (0.5,1)$ such that
$(T)_\gamma|l$   is satisfied.
\item The effective criterion with respect  to $l$ is satisfied.

\item
$(T')|l$   is satisfied.

\end{enumerate}
\end{thm}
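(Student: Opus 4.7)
The plan is to establish the cycle of implications (c) $\Rightarrow$ (a) $\Rightarrow$ (b) $\Rightarrow$ (c). The implication (c) $\Rightarrow$ (a) is immediate since $(T')\vert l$ is by definition $(T)_\gamma\vert l$ for all $\gamma\in(0,1)$. So all the content lies in the other two implications.

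For (b) $\Rightarrow$ (c), I would use a renormalization over geometrically growing box scales. The effective criterion gives, at a base scale $L_0$, a quantitative annealed bound on the probability that the walk exits through the ``wrong'' side of a box ${\cal B}$ of the prescribed form; writing this in terms of $\E\rho_{\cal B}^a$ and combining with a Chebyshev argument yields a stretched-exponential-in-$L_0$ control at the base scale. I would then iterate this base estimate by decomposing the exit event from a box at scale $L_{n+1}$ into a bounded number of successive box exits at scale $L_n$, tracking how the quenched exit probabilities compose through hits of shifted boxes. Choosing $L_{n+1}=L_n^{1/\gamma}$ (or similar geometric growth) and using the resulting recursion shows that the exit probability from slabs decays like $e^{-L^\gamma}$ for every $\gamma<1$, which is exactly $(T')\vert l$. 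This is essentially a Sznitman-type renormalization and the key engine is that the prefactors in the effective criterion are polynomial in $L$, while the bound on $\E\rho_{\cal B}^a$ is strong enough to win after a single renormalization step.

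For (a) $\Rightarrow$ (b) I fix $\gamma\in(0.5,1)$ and aim to verify the effective criterion. The assumption $(T)_\gamma\vert l$ supplies an annealed bound
\[
P_0(X_{T_B}\cdot l<0)\le e^{-cL^\gamma}
\]
on the exit probability through the unfavored side of $B$. I would first upgrade this to a quenched statement via Chebyshev: the set
\[
{\cal A}:=\big\{\omega:\ P_{0,\omega}(X_{T_B}\notin\partial_+B)\ge e^{-cL^\gamma/2}\big\}
\]
has $\P({\cal A})\le e^{-cL^\gamma/2}$. Then I decompose
\[
\E\rho_{\cal B}^a = \E\big[\rho_{\cal B}^a;\,{\cal A}^c\big] + \E\big[\rho_{\cal B}^a;\,{\cal A}\big].
\]
On ${\cal A}^c$, the numerator of $\rho_{\cal B}$ is small so $\rho_{\cal B}\le e^{-c L^\gamma/2}$, giving a tiny contribution. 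On ${\cal A}$, the uniform ellipticity assumption \eqref{uniformEll} gives a crude but deterministic bound $\rho_{\cal B}\le\kappa^{-c'\tilde L}$, so this term contributes at most $\kappa^{-ac'\tilde L}e^{-cL^\gamma/2}$. Choosing $\tilde L,a$ to optimize, and comparing with the polynomial prefactors $(\log 1/\kappa)^{3(d-1)}\tilde L^{d-1}L^{3(d-1)+1}$ in the criterion, the infimum can be made less than $1$ for $L$ large.

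The main obstacle is this balancing step, and it is precisely here that the threshold $\gamma>1/2$ enters. The contribution from atypical environments is governed by the competition between the factor $\kappa^{-O(\tilde L)}$ and the probability bound $e^{-cL^\gamma}$; since $\tilde L$ must be taken comparable to $L$ in order to keep the polynomial prefactors manageable, one needs $L^\gamma$ to dominate a power essentially of size $L$, which forces $\gamma>1/2$ in this naive Chebyshev-based argument. Pushing below $1/2$ requires a more delicate control of the anomalous-environment event, which is precisely what the subsequent sections of the paper provide via the seed estimate and the refined decomposition of $\E\rho_{\cal B}^a$.
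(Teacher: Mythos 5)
This theorem is quoted from Sznitman \cite{Sz-02}; the paper itself gives no proof of it, so there is nothing internal to compare against, but your outline matches the argument that the introduction attributes to Sznitman: (c) $\Rightarrow$ (a) is definitional, (a) $\Rightarrow$ (b) is the Chebyshev/Markov splitting of $\E \rho_{\cal B}^a$ over typical and atypical environments, and (b) $\Rightarrow$ (c) is the multi-scale renormalization. Two small points. First, the crude ellipticity bound on $\rho_{\cal B}$ on the atypical set should be $\kappa^{-c'L}$ (the relevant distance is to $\partial_+ B$ in the direction $l$), not $\kappa^{-c'\tilde{L}}$; this is harmless here since $\tilde{L}\asymp L$. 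Second, your verbal account of where $\gamma>1/2$ enters (``$L^\gamma$ must dominate a power essentially of size $L$'') is imprecise: with $a=L^{-\alpha}$ the atypical term costs $e^{O(aL)}\P({\cal A})=e^{O(L^{1-\alpha})-cL^{\gamma}/2}$, forcing $\alpha>1-\gamma$, while the typical term is $e^{-cL^{\gamma-\alpha}/2}$, forcing $\alpha<\gamma$; these are compatible exactly when $\gamma>1/2$. Finally, be aware that your two-sentence sketch of (b) $\Rightarrow$ (c) compresses what is by far the longest part of \cite{Sz-02} --- the precise recursion propagating bounds on $\E\rho^a$ across growing scales --- so as written it is an accurate roadmap rather than a proof.
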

To prove Theorem \ref{TGammaCondThm} we will also take advantage of the effective
criterion with respect to a particular
direction $\hat v$, called the {\it asymptotic direction}.

\subsection{Asymptotic direction and atypical quenched exit distributions} \label{asDirSubsec}
Here we recall that under $(T)_\gamma|l$ the random walk has an asymptotic
direction $\hat v$. As it will be explained, this implies that it will be
enough to prove that $(T)_\gamma|l$ implies
the effective criterion with respect to $\hat v$.
In Corollary 1.5 of \cite{Sz-02}, $(T)_\gamma|l$ is shown to be
equivalent to the simultaneous fulfilment of the following conditions.
{\it \begin{enumerate}
\item[(i)] \begin{equation} 
\nonumber
\{X_n:n\ge 0\} \text{ is transient in the direction } l.
\end{equation}
\item[(ii)] For some $c >0,$ 
\begin{equation} \label{finiteExponExpect}
E_0 \exp\{ c \sup_{0 \leq n \leq \tau_1} \vert X_n \vert^\gamma\} < \infty.
\end{equation}
\end{enumerate}
}

\noindent  Here, $|\cdot|$ denotes the Euclidean norm and $\tau_1$ the regeneration time
defined as the first time $X_n \cdot l$ obtains a new maximum and never goes
below that maximum again,
i.e.

$$
\tau_1:=\inf\Big\{n\ge 1: \sup_{0\le k\le n-1}X_k\cdot l< X_n\cdot l\ {\rm and}\ 
\inf_{k\ge n}X_k\cdot l\ge X_n\cdot l\Big\}.
$$
Transience in the direction $l$ implies that $\tau_1$ is $P_0$-a.s. finite, see \cite{SzZe-99}.

Due to $(i)$, $(T)_\gamma|l$ implies that condition $(a)$ of Theorem 1 in \cite{Si-07} 
is fulfilled. Hence we have the existence of an asymptotic
direction $\hat{v} \in \mathbb{S}^{d-1},$ i.e. 
\begin{equation} \label{asDir}
\lim_{n \to \infty} X_n/\vert X_n \vert = \hat{v} \quad P_0-a.s.
\end{equation}
Furthermore, as it is explained in Theorem 1.1 of \cite{Sz-02}, under $(T)_\gamma|l$, $(T)_\gamma|l'$
holds if and only if $\hat v\cdot l'>0$. Therefore, if we establish that
$(T)_\gamma|l$ implies the effective criterion with respect to the asymptotic
direction $\hat v$, by the equivalence between parts $(b)$ and $(c)$ of Theorem \ref{Sz24},
we conclude that $(T')|\hat v$ holds, and hence $(T')|l$ also.

We now turn to two basic lemmas giving estimates on the occurrence of atypical
quenched exit distributions for the RWRE.
In the formulation of these results, $B$ denotes the box
$\{x\in\mathbb Z^d: x\in \hat{R}((-(L-2),L+2) \times (-3L,3L)^{d-1})\}$
where $\hat{R}$ is a rotation mapping $e_1$ to $\hat{v},$ cf. (\ref{asDir}).
A {\it typical} quenched exit distribution for the RWRE gives a large
probability to laws concentrated on the walk starting from $0$ exiting
 the box $B$ through the front part of the boundary
$\partial_+ B$ defined in (\ref{bdPosPart}). The first lemma, whose proof we omit, is a direct consequence
of Lemma 1.3 in \cite{Sz-02}.

\begin{lem}
\label{gamma-decay} Let $l\in\mathbb S^{d-1}$ and assume that $(T)_\gamma|l$
is satisfied.  Then
$$
-\delta_1:= \limsup_{L \to \infty} L^{-\gamma} \log P_0 (X_{T_B} \notin \partial_+ B) < 0.
$$
\end{lem}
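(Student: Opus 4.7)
The plan is to split the ``bad'' exit event into a back-face piece and a lateral-face piece, bounding each separately, after first reducing to direction $\hat v$. Since $(T)_\gamma\mid l$ together with the existence of the asymptotic direction $\hat v$ implies $(T)_\gamma\mid l'$ for every $l'$ with $\hat v\cdot l'>0$ (as recalled above), in particular $(T)_\gamma\mid\hat v$ holds; and because $B$ is built via the rotation $\hat R$ carrying $e_1$ to $\hat v$, it is natural to work directly with $\hat v$. I would decompose
$$
\{X_{T_B}\notin\partial_+B\}\subseteq E_b\cup E_\ell,
$$
where $E_b:=\{X_{T_B}\cdot\hat v\le-(L-1)\}$ (back-face exit) and $E_\ell:=\{|\hat R(e_i)\cdot X_{T_B}|\ge 3L\text{ for some }i\in\{2,\ldots,d\}\}$ (lateral-face exit).

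For $E_b$, introduce the slab $U:=U_{\hat v,b,L+2}$ with $b$ chosen so that $b(L+2)=L-2$; then $B\subset U$, the back wall of $U$ coincides with that of $B$ in the $\hat v$-direction, and $U$ is unbounded laterally. Consequently, on $E_b$ we have $T_U=T_B$ and $X_{T_U}\cdot\hat v<0$, hence
$$
\P_0(E_b)\le\P_0(X_{T_U}\cdot\hat v<0)\le e^{-cL^\gamma}
$$
by $(T)_\gamma\mid\hat v$ applied to $U$.

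For $E_\ell$, I would use the regeneration structure together with (\ref{finiteExponExpect}). Writing $\tau_k$ for the $k$-th regeneration time relative to $\hat v$, $\Delta_k:=X_{\tau_k}-X_{\tau_{k-1}}$, $M_k:=\sup_{\tau_{k-1}\le n\le\tau_k}|X_n-X_{\tau_{k-1}}|$, and $\Delta_k^\perp$ the projection of $\Delta_k$ onto the hyperplane orthogonal to $\hat v$, the $M_k$ are independent for $k\ge 2$ with $\E\exp(cM_k^\gamma)<\infty$, and $\E\Delta_k^\perp=0$ since the asymptotic velocity is parallel to $\hat v$. Because $X_{T_B}\cdot\hat v\le L+2$ and each regeneration advances $X\cdot\hat v$ by at least one, the number $N$ of regenerations before $T_B$ satisfies $N\le L+3$. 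Writing $X_{T_B}^\perp=\sum_{k\le N}\Delta_k^\perp+(X_{T_B}-X_{\tau_N})^\perp$, on $E_\ell$ either $\big|\sum_{k\le N}\Delta_k^\perp\big|\ge 3L/2$ or $M_{N+1}\ge 3L/2$; a sub-Weibull concentration bound for the centred sum handles the first case, and a union bound over the $\le L+3$ blocks combined with the $M_k$-tails handles the second, both yielding a contribution of order $e^{-cL^\gamma}$.

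The main obstacle is the lateral estimate: the regeneration increments have only $|\cdot|^\gamma$-subexponential tails, so the standard Cram\'er bound fails and one needs a two-regime (sub-Gaussian for moderate $t$, sub-Weibull for large $t$) concentration inequality in order to obtain decay at rate exactly $L^\gamma$. This is essentially the content of Lemma~1.3 of \cite{Sz-02}, which is why the excerpt invokes that result. Combining the back-face and lateral-face estimates yields $\limsup_{L\to\infty}L^{-\gamma}\log\P_0(X_{T_B}\notin\partial_+B)<0$, as claimed.
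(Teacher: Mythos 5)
The paper does not actually prove this lemma: it states that the result ``is a direct consequence of Lemma 1.3 in \cite{Sz-02}'' and omits the argument. What you have written is, in effect, a reconstruction of the proof of that cited lemma from more primitive ingredients, namely the slab estimate in the definition of $(T)_\gamma$ and the regeneration structure together with (\ref{finiteExponExpect}). Your decomposition into a back-face event $E_b$ and a lateral event $E_\ell$ is the right one, and the back-face estimate is correct (the only point to make explicit is that your $b=(L-2)/(L+2)$ depends on $L$; this is harmless because $b\mapsto P_0(X_{T_{U_{\hat v,b,L}}}\cdot\hat v<0)$ is monotone, so one may replace $b(L)$ by the fixed value $1/2$).

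The lateral estimate, however, contains a genuine flaw as written: the claim that ``each regeneration advances $X\cdot\hat v$ by at least one, [so] the number $N$ of regenerations before $T_B$ satisfies $N\le L+3$'' is false in general. The increments $X_{\tau_k}-X_{\tau_{k-1}}$ lie in $\Z^d\setminus\{0\}$ and have strictly positive projection on $\hat v$, but for a non-lattice direction $\hat v$ the infimum of $\{z\cdot\hat v: z\in\Z^d,\ z\cdot\hat v>0\}$ is zero, so there is no deterministic linear bound on $N$; one needs an additional large-deviation estimate such as $P_0(N\ge kL)\le P_0\big(\sum_{j\le kL}\Delta_j\cdot\hat v\le L+3\big)$, which decays fast enough since the $\Delta_j\cdot\hat v$ are i.i.d.\ positive with positive mean. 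Once that is patched, the two-regime (sub-Gaussian/stretched-exponential) concentration you invoke does give the rate $e^{-cL^\gamma}$. A cleaner route, and the one implicit in the paper, is to bypass the regeneration decomposition for $E_\ell$ entirely and apply Theorem \ref{projectionControl} with $\rho=1$ and $u=L+3$: on $E_\ell$ one has $\sup_{0\le n\le L^{\hat v}_{L+3}}|\pi(X_n)|\ge 3L-c(d)$, and that theorem gives decay at rate $u^{(2\rho-1)\wedge\gamma\rho}=u^\gamma$ directly. So your approach is essentially the intended one, but the step bounding $N$ must be repaired, or replaced by the appeal to Theorem \ref{projectionControl}.
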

The second lemma will turn out to be a key estimate in the proof of Theorem 
\ref{TGammaCondThm}.
For this purpose we define the function
\begin{equation} \label{fDef}
f(\beta) = d\Big(\beta - \frac{1}{1+\gamma}\Big) \frac{1+\gamma}{\gamma}
\end{equation}
for $\beta \in ((1+\gamma)^{-1},1).$
\begin{lem}
\label{anomalousExitDist} Assume that $(T)_\gamma| \hat{v}$
is satisfied.  Then, if $\beta\in ((1+\gamma)^{-1},1),$ for any $c > 0$ and $\zeta \in (0,f(\beta))$ we have
$$
-\delta_2 := \limsup_{L \to \infty} L^{-\zeta} \log \P \big(P_{0,\omega}
(X_{T_B} \in \partial_+ B)\le e^{-cL^{\beta}}\big) < 0.
$$
\end{lem}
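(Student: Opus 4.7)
The plan is to prove Lemma \ref{anomalousExitDist} by combining Sznitman's renormalization scheme from \cite{Sz-01}, to be recalled at the start of Section \ref{anomalousExitDistProofSect}, with a seed estimate adapted to the weaker hypothesis $(T)_\gamma|\hat v$.  The argument splits into a seed input at a small auxiliary scale $\ell$, followed by iteration through a sequence $\ell = L_0 \ll L_1 \ll \cdots \ll L_n \asymp L$ of scales that propagates the seed up to the target box $B$.

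The seed would be obtained from Lemma \ref{gamma-decay} via Chebyshev's inequality.  Applied to a box $B_\ell$ of scale $\ell$, one has the annealed bound $P_0(X_{T_{B_\ell}} \notin \partial_+ B_\ell) \le e^{-c \ell^\gamma}$, which converts to
$$
\P\bigl(P_{0,\omega}(X_{T_{B_\ell}} \notin \partial_+ B_\ell) > e^{-c\ell^\gamma/2}\bigr) \le e^{-c \ell^\gamma/2}.
$$
This plays the role of Lemma 3.3 of \cite{Sz-01} in the present setting, the modification being that the stretched-exponential rate in the seed is $\gamma$ instead of the exponent $1$ available under $(T)$.

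For the renormalization, the lemma of \cite{Sz-01} would provide a recursion of schematic form $p(L_{k+1}) \le C_d (L_{k+1}/L_k)^{\kappa(d)} p(L_k)^2$ for the probability $p(L_k)$ that the quenched exit distribution from a box of scale $L_k$ is atypical, exploiting the fact that such an event at scale $L_{k+1}$ forces two disjoint atypical sub-events at the finer scale whose environments are $\mu$-independent by the product structure.  Iterating from $L_0 = \ell$ up to $L_n \asymp L$, and optimizing the seed scale $\ell = L^\theta$ and the growth ratios against the target quenched rate $e^{-c L^\beta}$ prescribed by the statement, should yield $\P(\cdots) \le e^{-c' L^\zeta}$ for any $\zeta < f(\beta)$.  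The explicit form $f(\beta) = d(\beta - (1+\gamma)^{-1})(1+\gamma)/\gamma$ emerges from this optimization: the threshold $\beta = (1+\gamma)^{-1}$, at which $f$ vanishes, is the least quenched rate $\beta$ for which the seed $\gamma$ can be transferred all the way up, and the dimensional factor $d$ enters through the $\sim (L_{k+1}/L_k)^d$ sub-boxes produced at each renormalization step.

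The principal technical obstacle is the seed estimate itself, redone under $(T)_\gamma$ rather than $(T)$: since the seed decay is merely stretched-exponential with exponent $\gamma<1$, the squaring gain $p(L_k)^2$ has to be checked to dominate the polynomial combinatorial factors $(L_{k+1}/L_k)^{\kappa(d)}$ uniformly along the cascade, and the initial scale $\ell$ must be chosen so that, at the top of the cascade, the induced quenched bound on exiting $B$ through $\partial_+ B$ indeed attains the prescribed strength $e^{-cL^\beta}$.  The residual arithmetic is then a bookkeeping exercise to deduce $f(\beta)$.
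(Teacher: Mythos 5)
Your high-level architecture --- a seed estimate at a base scale fed into Sznitman's renormalization cascade, with $f(\beta)$ emerging as an interpolation up to the value $d$ at $\beta=1$ --- is indeed the paper's strategy. But there is a genuine gap at exactly the point you yourself flag as the principal obstacle: the seed. The Chebyshev bound you propose, derived from Lemma \ref{gamma-decay}, controls the quenched exit distribution only for the walk started at the origin and only for boxes whose transverse dimension is of order $L$ (the box $B$ has cross-section $(-3L,3L)^{d-1}$). The renormalization scheme of Lemma \ref{renormalizationLemma} requires as input a tail estimate for $X_{\beta_0,L}$, i.e. for the \emph{infimum} over all starting points in a \emph{thin} slab $B_{1,\beta_0,L}$ of transverse dimension $L^{\beta_0}$ with $\beta_0$ close to $(1+\gamma)^{-1}$. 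For such boxes the dominant failure mode is a transverse fluctuation of order $L^{\beta_0}\ll L$ (or a backtrack of order $L^{\beta_0}$), and under the weak hypothesis $(T)_\gamma$ the only available control on the transverse fluctuations is Theorem A.2 of \cite{Sz-02} (Theorem \ref{projectionControl} here), which gives the rate $u^{(2\rho-1)\wedge\gamma\rho}$ for fluctuations of size $u^{\rho}$ over longitudinal distance $u$. Applied directly at $u=L$, $\rho=\beta_0$, this is too weak; the paper instead chains through $\lfloor L^{1-\chi}\rfloor$ good sub-boxes of longitudinal size $\approx L^{\chi}$ (the good/bad-point construction in the proof of Lemma \ref{seedEstimateLemma}), paying a quenched cost $\tfrac12\kappa^{c(d)\eta L^{\beta_0}}$ per link, which is what produces the rate $(\beta+\beta_0-1)\wedge\gamma\beta_0$ and hence, upon choosing $\beta_0=(1+\gamma)^{-1}+\varepsilon$ (the smallest $\beta_0$ with $\gamma\beta_0\ge 1-\beta_0$), the admissible seed function $f_{0,\varepsilon}(\beta)=\beta-(1+\gamma)^{-1}$ and the threshold $(1+\gamma)^{-1}$. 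None of this is recoverable from a single application of Chebyshev's inequality to the annealed estimate, and your proposal never engages with the transverse fluctuations at all; this omitted step is the main technical content of Section \ref{anomalousExitDistProofSect}.

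Two further points. First, the recursion you attribute to the renormalization lemma, $p(L_{k+1})\le C_d(L_{k+1}/L_k)^{\kappa(d)}p(L_k)^2$ based on ``two disjoint atypical sub-events'', is not how Lemma \ref{renormalizationLemma} operates: a squaring recursion would double the exponent at each step, whereas the lemma delivers the \emph{linear interpolation} between $f_0(\beta_0)$ at $\beta_0$ and $d$ at $1$, the value $d$ reflecting the number of independent columns of sub-boxes rather than a pairing. Second, the concluding ``bookkeeping exercise to deduce $f(\beta)$'' is precisely where the work lies: $f$ is forced to be the interpolation between $0$ at $(1+\gamma)^{-1}$ and $d$ at $1$ only once one has verified that the seed satisfies $f_{0,\varepsilon}(\beta_0)\to 0$ as $\beta_0\downarrow(1+\gamma)^{-1}$, and one must still pass from the renormalization boxes $B_{2,\beta,L}(0)$ back to the box $B$ of the statement via the strong Markov property and uniform ellipticity. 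As written, the proposal would not yield the claimed function $f$.
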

Note that $\delta_2$ may well depend on $\beta,$ $c$ and $\zeta.$ However, we usually do not
name this dependence explicitly.
The proof of this lemma involves the use of renormalization ideas beginning
with a {\it seed estimate} lemma. We postpone it to section \ref{anomalousExitDistProofSect}.

\subsection{Proof of the effective criterion with respect to the asymptotic
  direction} \label{effCritAsDirSubsec}
As in the proof of Theorem \ref{Sz24} given by Sznitman in  \cite{Sz-02}, 
we will  show that the quantity $\E \rho^a_{\cal B}$
decays as a stretched exponential as $L \to \infty$ for a suitable choice of $a$ and ${\cal B}.$
 A key ingredient of the
proof turns out to be the use of the renormalization ideas of Sznitman to
obtain upper bounds for the probability of slowdown traps on the environment.   

We will only consider the case $\gamma\le 0.5$.
We set $a:= L^{-\alpha}$ for some $\alpha \in (0,1)$
and consider the boxes
$B:= \{x\in\mathbb Z^d: x\in \hat{R}((-(L-2),L+2) \times (-3L,3L)^{d-1})\}$
where $\hat{R}$ is a rotation mapping $e_1$ to $\hat{v},$ cf. (\ref{asDir}).
Uniform ellipticity yields $\P (P_{0, \omega}(X_{T_B} \in \partial_+ B) \leq \kappa^{c(d)L}) = 0$ for
some dimension dependent constant $c(d).$ Thus we can split $\E \rho^a_{\cal B}$ according to

\begin{equation} \label{expDecomposition}
\E \rho^a = (I)+(II)+(III),
\end{equation}
where
\begin{align*}
(I):=&\E\big(\rho^a, P_{0,\omega} (X_{T_B} \in \partial_+ B) > e^{-k_0 L^\gamma}\big),\\
(II):=&\E\big(\rho^a, e^{-k_1 L^{\beta_1}} < P_{0,\omega} (X_{T_B} \in \partial_+ B) \leq e^{-k_0 L^\gamma}\big)
\end{align*}
and
\begin{equation} \label{IIISum}
(III):=
\sum_{j=1}^n \E \big(\rho^a, e^{-k_{j+1}L^{\beta_{j+1}}} <
P_{0,\omega} (X_{T_B} \in \partial_+ B) \leq e^{-k_j L^{\beta_j}}\big).
\end{equation}
Here, $n$ and $k_0$ as well as $k_{j}$ and $\beta_{j}$ for $j \in \{ 1, \dots, n+1\}$ are positive constants
to be chosen later and
satisfying
\begin{equation} \label{betaAPrioriCond}
1 = \beta_{n+1} > \beta_n > \dots > \beta_1 > (1+\gamma)^{-1}
\end{equation}
as well as $k_{n+1}$ large enough. In fact, $k_1, \dots, k_n > 0$ can be chosen arbitrarily.
\begin{lem}
\label{I} For all $L>0,$
\begin{equation} 
\nonumber
(I) 
\leq e^{k_0L^{\gamma - \alpha} - \delta_1 L^{\gamma - \alpha}+o(L^{\gamma-\alpha})}.
\end{equation}
\end{lem}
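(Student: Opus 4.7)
The plan is to bound $(I)$ by exploiting the explicit definition of $\rho$ on the event under consideration and then passing from a quenched to an annealed estimate via Jensen's inequality, which is applicable because $a=L^{-\alpha}\in(0,1).$

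First, I would rewrite $\rho$ as
$$
\rho = \frac{P_{0,\omega}(X_{T_B}\notin\partial_+B)}{P_{0,\omega}(X_{T_B}\in\partial_+B)}.
$$
On the event $\{P_{0,\omega}(X_{T_B}\in\partial_+B)>e^{-k_0L^\gamma}\}$, the denominator is bounded below by $e^{-k_0L^\gamma}$, so
$$
\rho^a \leq e^{ak_0 L^\gamma}\bigl(P_{0,\omega}(X_{T_B}\notin\partial_+B)\bigr)^a = e^{k_0L^{\gamma-\alpha}}\bigl(P_{0,\omega}(X_{T_B}\notin\partial_+B)\bigr)^a,
$$
since $a=L^{-\alpha}.$ Absorbing the deterministic factor outside the expectation, one gets
$$
(I) \leq e^{k_0 L^{\gamma-\alpha}}\,\E\bigl[\bigl(P_{0,\omega}(X_{T_B}\notin\partial_+B)\bigr)^a\bigr].
$$

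Next, since $a\in(0,1)$, the function $x\mapsto x^a$ is concave on $[0,\infty)$, and Jensen's inequality yields
$$
\E\bigl[\bigl(P_{0,\omega}(X_{T_B}\notin\partial_+B)\bigr)^a\bigr] \leq \bigl(\E\, P_{0,\omega}(X_{T_B}\notin\partial_+B)\bigr)^a = \bigl(P_0(X_{T_B}\notin\partial_+B)\bigr)^a,
$$
by definition of the annealed law. At this stage, Lemma \ref{gamma-decay} applies: the assumption $(T)_\gamma\mid\hat v$ (inherited from $(T)_\gamma\mid l$ via the asymptotic direction discussion of subsection \ref{asDirSubsec}) gives $\log P_0(X_{T_B}\notin\partial_+B)\leq -\delta_1 L^\gamma + o(L^\gamma)$ as $L\to\infty.$ Raising to the power $a=L^{-\alpha}$ multiplies both terms by $L^{-\alpha}$, producing $-\delta_1 L^{\gamma-\alpha}+o(L^{\gamma-\alpha})$ in the exponent.

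Combining these bounds gives
$$
(I) \leq e^{k_0 L^{\gamma-\alpha} - \delta_1 L^{\gamma-\alpha} + o(L^{\gamma-\alpha})},
$$
which is the desired conclusion. There is no real obstacle here; the only point that requires care is the use of Jensen's inequality, which crucially relies on $a\in(0,1)$ and converts the quenched bound on the slowdown term into an annealed one, where Lemma \ref{gamma-decay} is directly available. The factor $e^{k_0 L^{\gamma-\alpha}}$ will eventually need to be dominated by the loss $\delta_1 L^{\gamma-\alpha}$ at the stage where $k_0$ is chosen small enough — but that choice of constants is a matter for the subsequent estimates on $(II)$ and $(III)$, not for this lemma.
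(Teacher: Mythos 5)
Your proof is correct and is essentially the paper's own argument: lower-bound the denominator of $\rho$ by $e^{-k_0L^\gamma}$ on the event defining $(I)$, pull out the deterministic factor $e^{ak_0L^\gamma}=e^{k_0L^{\gamma-\alpha}}$, apply Jensen's inequality with the concavity of $x\mapsto x^a$ to pass to the annealed probability, and conclude with Lemma \ref{gamma-decay}. No differences worth noting.
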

\begin{proof}
By Jensen's inequality, we see that $(I)\le e^{ak_0 L^{\gamma}} P_0(X_{T_B} \notin \partial_+ B)^a$.
The conclusion now follows from Lemma \ref{gamma-decay}.
\end{proof}
\noindent From this lemma it follows that if we choose
\begin{equation} \label{alphaConstraint}
\alpha<\gamma
\end{equation}
and
\begin{equation} \label{kZeroConstraint}
k_0<\delta_1,
\end{equation}
there exist positive constants $c_1$ and $c_2$ such that for all $L>0,$

$$(I)\le c_1e^{-c_2L^{\gamma-\alpha}}.$$ 

\begin{lem}
\label{II}
 For all $L>0$,

\begin{equation} 
\nonumber
(II) \leq e^{k_1 L^{\beta_1 - \alpha} -\delta_1 L^\gamma+o(L^{\gamma})}.
\end{equation}
\end{lem}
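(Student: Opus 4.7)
The plan is to exploit both sides of the sandwich $e^{-k_1 L^{\beta_1}} < P_{0,\omega}(X_{T_B} \in \partial_+ B) \leq e^{-k_0 L^\gamma}$ that cuts out the event in the definition of $(II)$, using each inequality for a different purpose. First, combining the lower bound with the trivial estimate $P_{0,\omega}(X_{T_B} \notin \partial_+ B) \leq 1$ and the choice $a = L^{-\alpha}$, I get the pointwise deterministic bound
\[
\rho^a \leq (e^{k_1 L^{\beta_1}})^a = e^{k_1 L^{\beta_1 - \alpha}}
\]
on the event. Calling the event $A$, this pulls the factor $e^{k_1 L^{\beta_1-\alpha}}$ out of the expectation, so that $(II) \leq e^{k_1 L^{\beta_1-\alpha}}\, \P(A)$, and it remains only to bound $\P(A)$.

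Next, I discard the lower bound in $A$ and keep only the upper bound, $P_{0,\omega}(X_{T_B} \in \partial_+ B) \leq e^{-k_0 L^\gamma}$, equivalently $P_{0,\omega}(X_{T_B} \notin \partial_+ B) \geq 1 - e^{-k_0 L^\gamma}$. Markov's inequality together with the tower property $\E P_{0,\omega}(X_{T_B}\notin\partial_+ B) = P_0(X_{T_B}\notin\partial_+ B)$ yields
\[
\P(A) \leq \frac{P_0(X_{T_B} \notin \partial_+ B)}{1 - e^{-k_0 L^\gamma}}.
\]
The denominator tends to $1$ and is thus absorbed into an $e^{o(L^\gamma)}$ factor, while Lemma \ref{gamma-decay} bounds the numerator by $e^{-\delta_1 L^\gamma + o(L^\gamma)}$. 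Multiplying the pointwise and probability bounds delivers the claim.

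There is no real obstacle here: the key conceptual point is simply to realise that each of the two defining inequalities of the slab plays a distinct role — the lower bound tames $\rho^a$, while the upper bound forces an atypical quenched exit distribution whose $\P$-probability is then controlled by Markov combined with Lemma \ref{gamma-decay}. Notably, the more delicate Lemma \ref{anomalousExitDist} is \emph{not} needed at this stage, because Markov is already strong enough when the quenched success probability is merely stretched-exponentially small at scale $L^\gamma$. It will become essential for the terms in $(III)$: there each slab has upper endpoint $e^{-k_j L^{\beta_j}}$ with $\beta_j > (1+\gamma)^{-1} > \gamma$ (recall we are in the regime $\gamma \leq 0.5$), so Markov on the complement still only yields a probability bound of order $e^{-\delta_1 L^\gamma}$, which is insufficient to offset the $\rho^a$-factor $e^{k_{j+1} L^{\beta_{j+1}-\alpha}}$; at that point the stretched-exponential decay at scale $L^{f(\beta)}$ provided by Lemma \ref{anomalousExitDist} must take over.
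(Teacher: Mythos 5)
Your proof is correct and follows essentially the same route as the paper: the lower endpoint of the slab controls $\rho^a$ (yielding the factor $e^{k_1L^{\beta_1-\alpha}}$), and the upper endpoint, rephrased as $P_{0,\omega}(X_{T_B}\notin\partial_+B)\ge 1-e^{-k_0L^\gamma}$, is handled by Markov/Chebyshev together with Lemma \ref{gamma-decay}. The only cosmetic difference is that you bound $P_{0,\omega}(X_{T_B}\notin\partial_+B)^a$ by $1$ before taking the expectation rather than after, which changes nothing.
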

\begin{proof} Note that
\begin{align*}
(II)&\leq e^{a k_1 L^{\beta_1}} \E (P_{0,\omega} (X_{T_B} \notin \partial_+ B)^a,
P_{0,\omega} (X_{T_B} \notin \partial_+ B) \geq 1-e^{-k_0 L^{\gamma}})\\
&\leq e^{k_1 L^{\beta_1 - \alpha}} P_0(X_{T_B} \notin \partial_+ B) (1-e^{-k_0 L^\gamma})^{-1},
\end{align*}
where to obtain the second line we used Chebychev's inequality.
\end{proof}
\noindent From this lemma we see that choosing 
\begin{equation} \label{betaIneq}
\beta_1 < 2 \gamma
\end{equation}
and $\alpha$ satisfying 
\begin{equation} \label{alphaSecondConstraint}
\alpha \in (\beta_1-\gamma, \gamma),
\end{equation}
one then has that
there exist positive constants $c_1$ and $c_2$ such that for all $L>0,$

$$
(II)\le c_1 e^{-c_2 L^{\gamma}}.
$$
Now, to control the third term, we  use the following lemma.

\begin{lem}
\label{III} Let the $\beta_j$'s be chosen as in (\ref{betaAPrioriCond}).
Then, for all $L>0,$ $j \in \{1, \dots, n\}$ and $\zeta \in (0, f(\beta_j)),$
 
\begin{equation}
\nonumber
\E\big(\rho^a, e^{-k_{j+1} L^{\beta_{j+1}}} \leq
P_{0,\omega} (X_{T_B} \in \partial_+ B) \leq e^{-k_j L^{\beta_j}}\big)
\le e^{k_{j+1} L^{\beta_{j+1}-\alpha}-\delta_2 L^{\zeta} + o(L^{\zeta})}
\end{equation}
where $f$ is defined as in (\ref{fDef}).
\end{lem}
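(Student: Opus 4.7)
The plan is to split $\E(\rho^a, E)$, where $E$ denotes the event appearing on the left-hand side, into a deterministic upper bound on $\rho^a$ that is valid on $E$ multiplied by $\P(E)$; the latter will then be controlled by Lemma~\ref{anomalousExitDist}. This parallels the strategy used in Lemmas~\ref{I} and~\ref{II}, except that here the range of $P_{0,\omega}(X_{T_B} \in \partial_+ B)$ is so small that a direct Chebyshev-style estimate, as in Lemma~\ref{II}, is too weak, forcing the use of the deeper atypical-exit bound of Lemma~\ref{anomalousExitDist}.

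First, on $E = \{e^{-k_{j+1} L^{\beta_{j+1}}} \leq P_{0,\omega}(X_{T_B} \in \partial_+ B) \leq e^{-k_j L^{\beta_j}}\}$, the lower bound on the quenched exit probability yields
\[
\rho_{\cal B}(\omega) = \frac{1 - P_{0,\omega}(X_{T_B} \in \partial_+ B)}{P_{0,\omega}(X_{T_B} \in \partial_+ B)} \leq e^{k_{j+1} L^{\beta_{j+1}}},
\]
and raising this to the power $a = L^{-\alpha}$ furnishes the pointwise bound $\rho^a \leq e^{k_{j+1} L^{\beta_{j+1} - \alpha}}$ on $E$. This produces the first exponential in the target inequality.

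Second, I would bound $\P(E)$ by the probability of the larger event $\{P_{0,\omega}(X_{T_B} \in \partial_+ B) \leq e^{-k_j L^{\beta_j}}\}$ and apply Lemma~\ref{anomalousExitDist} with $c = k_j$ and $\beta = \beta_j$. The a priori arrangement (\ref{betaAPrioriCond}) guarantees $\beta_j \in ((1+\gamma)^{-1},1)$, so for every $\zeta \in (0, f(\beta_j))$ that lemma gives
\[
\P(E) \leq e^{-\delta_2 L^\zeta + o(L^\zeta)}.
\]
Multiplying the deterministic bound by this probability estimate produces the claim.

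The only real obstacle has already been surmounted in Lemma~\ref{anomalousExitDist}, which underwrites the stretched-exponential decay of the atypical exit probabilities; the remaining work at the present level is purely bookkeeping. The subtle point on this level is the choice of the slicing: one needs $\beta_j$ strictly above $(1+\gamma)^{-1}$ so that $f(\beta_j) > 0$ and Lemma~\ref{anomalousExitDist} gives a non-trivial estimate, which is exactly what (\ref{betaAPrioriCond}) secures.
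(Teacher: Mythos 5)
Your proposal is correct and follows exactly the paper's argument: the pointwise bound $\rho^a\le e^{k_{j+1}L^{\beta_{j+1}-\alpha}}$ on the event from its lower constraint, followed by enlarging the event to $\{P_{0,\omega}(X_{T_B}\in\partial_+B)\le e^{-k_jL^{\beta_j}}\}$ and invoking Lemma~\ref{anomalousExitDist} with $c=k_j$, $\beta=\beta_j$. Nothing to add.
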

\begin{proof}
We estimate
\begin{align*}
\E(\rho^a, &e^{-k_{j+1} L^{\beta_{j+1}}} \leq
P_{0,\omega} (X_{T_B} \in \partial_+ B) \leq e^{-k_j L^{\beta_j}})\\
 &\leq e^{k_{j+1}L^{\beta_{j+1}-\alpha}}
\P(P_{0,\omega} (X_{T_B} \in \partial_+ B) \leq e^{-k_j L^{\beta_j}}).
\end{align*}
Since $\beta_j > (1+\gamma)^{-1},$ the application of Lemma \ref{anomalousExitDist} yields the result.
\end{proof}

To prove the effective criterion with respect to $\hat v$,
it is enough to prove that the terms
 $(I),$ $(II)$ and $(III)$ of the decomposition (\ref{expDecomposition}) decay
stretched exponentially.
 As follows from the discussions subsequent to Lemmas \ref{I} and
\ref{II}, for $(I)$ and $(II)$ this is achieved by respecting (\ref{alphaConstraint}),
(\ref{kZeroConstraint}), (\ref{betaIneq}) and (\ref{alphaSecondConstraint}). It therefore remains to deal
with $(III).$ Since we may choose $\alpha < \gamma$ arbitrarily close to $\gamma,$ Lemma \ref{III}
assures the desired decay once the following set of inequalities is fulfilled:

\begin{align}
\left. \begin{array}{rl}
\frac{1}{1+\gamma}<\beta_1   &<2\gamma,\\
\frac{1}{1+\gamma} <\beta_2 &< \gamma+f(\beta_1),\\
\frac{1}{1+\gamma} <\beta_3  &<\gamma+ f(\beta_2),\\
&\vdots\\
\frac{1}{1+\gamma} <\beta_{n}  &<\gamma+ f(\beta_{n-1}),\\
 1 &<\gamma+ f(\beta_{n}).
\end{array} \right\} \label{betaAPosterioriCond}
\end{align}
Now define $F (x) :=\gamma+ f(x)$ and for $k\ge 1$,
 $F^{(k)} (x) := F\circ F^{(k-1)}(x)$ with $F^{(0)}(x)=x$. Then 
in particular (\ref{betaAPosterioriCond}) is fulfilled if
\begin{equation} \label{betaReformulation}
\frac{1}{1+\gamma}<\beta_j < F^{(j-1)}(\beta_{j-1}), \quad j \in \{1, \dots,n+1\},
\end{equation}
with $\beta_0:=2\gamma$.
Therefore, it is enough to choose $\gamma$ such that
$(1+\gamma)^{-1} <2\gamma$ and
$(F^{(j)}(2\gamma))_{j \ge 0}$ forms an increasing sequence with
\begin{equation} \label{limItCond}
1 < \lim_{j \to \infty} F^{(j)} (2\gamma).
\end{equation}
In this case we can choose the constants appearing in (\ref{IIISum}) according to
$n := \inf\{j \in \N : F^{(j)}(\gamma) > 1\}$ and $\beta_j$ as large as permitted
by (\ref{betaReformulation}).

Now in order to check (\ref{limItCond}) we 
solve the equation $x = F(x)$ for $x$ to obtain the (unstable) fixed-point
\begin{equation} 
\nonumber
x^* := \frac{d-\gamma^2}{(1+\gamma)d - \gamma}.
\end{equation}
Thus, we observe that it is sufficient to have
$2\gamma > x^*>(1+\gamma)^{-1}$ in order for (\ref{limItCond}) to be
fulfilled. For $\gamma \in (0, 0.5]$, it is easy to check that the second inequality
is satisfied. Furthermore, the first inequality
\begin{equation} 
\nonumber
2 \gamma > \frac{d-\gamma^2}{(1+\gamma)d - \gamma},
\end{equation}
is clearly true whenever
$$
 \gamma > \gamma_d=\frac{-2d + \sqrt{12d^2 - 4 d}}{2(2d-1)}.
$$

\subsection{Proof of Theorem \ref{TGammaCondThm}} \label{proofMainResultSect}
By Theorem \ref{Sz24} of Sznitman \cite{Sz-02}, it is enough to consider
the case in which $\gamma\in (\gamma_d,0.5]$.
Assume that $(T)_\gamma \vert l$ holds for some $l \in \mathbb{S}^{d-1}.$
It follows from Theorem 1.1 of \cite{Sz-02} 
 that $l \cdot \hat{v} > 0$ and that
$(T)_\gamma \vert l'$ is satisfied if and only if $\hat v \cdot l' > 0.$
In particular,
$(T)_\gamma \vert \hat{v}$ holds.
In the previous subsection we proved that if
$(T)_\gamma| \hat{v}$ is satisfied for some $\gamma\in (\gamma_d,0.5]$, then the effective criterion is satisfied with
respect to the asymptotic direction $\hat v$. Now, by the equivalence between
parts $(b)$ and $(c)$ of Theorem \ref{Sz24}, it follows that $(T')|{\hat v}$
is satisfied. 
Since $\hat v\cdot l>0$, it follows that $(T')\vert l$ is satisfied.

\section{Atypical quenched exit distribution estimates} \label{anomalousExitDistProofSect}
The aim of this section is to prove Lemma \ref{anomalousExitDist}. 
We will apply Lemma 3.2 of \cite{Sz-01}, which we recall in subsection 3.1 and a modification of Lemma 3.3 of
the same paper, which we prove in subsection 3.2. In subsection 3.3 we show
how these results imply Lemma \ref{anomalousExitDist}.

\subsection{Sznitman's renormalization lemma}

We introduce for $\beta, L>0$ and $w \in \Z^d$
the notation
$$
X_{\beta, L}(w) := -\log \inf_{x \in B_{1, \beta, L}(w)} P_{x,\omega} (X_{T_{B_{2,\beta, l}(w)}} \in \partial^* B_{2,\beta,L}(w)),
$$
where
$$
B_{1,\beta,L}(w) := \big\{x\in \Z^d: x\in
\hat{R} (w + [0,L] \times [0,L^\beta]^{d-1})\big\},
$$
$$
B_{2,\beta,L}(w) := \big\{x\in \Z^d: x\in \hat{R}(w+(-dL^\beta, L] \times (-dL^\beta, (d+1)L^\beta)^{d-1}) \big\}
$$
and
$$
\partial^* B_{2, \beta, L}(w) := \partial B_{2, \beta, L}(w) \cap B_{1, \beta, L}(w + Le_1).
$$
We now recall the statement of the renormalization result of \cite{Sz-01}.
\begin{lem}[Sznitman, \cite{Sz-01}] \label{renormalizationLemma}
Assume $d \geq 2$ and (\ref{uniformEll}).
Assume that $\beta_0 \in (0,1)$ and $f_0$ is a positive function defined on $[\beta_0,1)$
such that
\begin{equation} 
\nonumber
f_0(\beta) \geq f_0(\beta_0) + \beta - \beta_0 \quad \text{ for } \beta \in [\beta_0, 1)
\end{equation}
and, for $\beta \in [\beta_0,1), \zeta < f_0(\beta),$
\begin{equation*}
\lim_{\beta' \uparrow \beta} \limsup_{L \to \infty} L^{-\zeta} \log \P (X_{\beta_0,L}(0) \geq L^{\beta'}) < 0.
\end{equation*}
Denote by $f$ the linear interpolation on $[\beta_0,1]$ of the value $f_0 (\beta_0)$ at $\beta_0$ and the
value $d$ at $1.$ Then, for $\beta \in [\beta_0,1)$ and $\zeta < f(\beta),$
\begin{equation} \label{renormalizationEst}
\lim_{\beta' \uparrow \beta} \limsup_{L \to \infty} L^{-\zeta} \log P(X_{\beta, L}(0) \geq L^{\beta'}) < 0.
\end{equation}
\end{lem}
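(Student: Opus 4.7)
The plan is to deduce the estimate on $X_{\beta,L}(0)$ from the estimate on $X_{\beta_0,L}$ by exploiting the fact that the enlarged transverse cross-section of side $L^\beta$ contains many disjoint parallel copies of the smaller cross-section of side $L^{\beta_0}$, so that independence of the environment (from the i.i.d.\ hypothesis on $\mu$) turns the single-box tail estimate into a much stronger joint tail estimate. The linear interpolation $f$ between $f_0(\beta_0)$ at $\beta_0$ and $d$ at $1$ should reflect the trade-off between the geometric gain from the number of parallel corridors and the decay exponent already available at scale $\beta_0$.

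First I would fix $\beta \in [\beta_0,1)$ and an intermediate ``input'' scale $\beta'' \in [\beta_0,\beta)$, and partition the $(d-1)$-dimensional transverse cross-section of $B_{1,\beta,L}(0)$ by a grid of spacing of order $L^{\beta''}$, producing $N \asymp L^{(\beta-\beta'')(d-1)}$ base points $w_1,\dots,w_N$ whose associated enlarged boxes $B_{2,\beta'',L}(w_i)$ are pairwise disjoint and all sit inside $B_{2,\beta,L}(0)$, with $\partial^* B_{2,\beta'',L}(w_i) \subset \partial^* B_{2,\beta,L}(0)$. For any $x\in B_{1,\beta,L}(0)$ uniform ellipticity lets the walk enter some $B_{1,\beta'',L}(w_i)$ in a bounded number of steps at a deterministic cost $\kappa^{c(d)}$, after which any exit of $B_{2,\beta'',L}(w_i)$ through $\partial^*$ is also an exit of $B_{2,\beta,L}(0)$ through $\partial^*$. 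This yields the pathwise domination
\[
X_{\beta,L}(0) \le \min_{1\le i \le N} X_{\beta'',L}(w_i) + c(d)\log(1/\kappa),
\]
so that $\{X_{\beta,L}(0) \ge L^{\beta'}\} \subset \bigcap_{i=1}^N \{X_{\beta'',L}(w_i) \ge L^{\beta'} - O(1)\}$.

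Since the environments inside the disjoint boxes $B_{2,\beta'',L}(w_i)$ are independent under $\P$, the hypothesis at scale $\beta''$ gives, for any $\zeta'' < f_0(\beta'')$,
\[
\P(X_{\beta,L}(0) \ge L^{\beta'}) \le \prod_{i=1}^N \P\bigl(X_{\beta'',L}(w_i) \ge L^{\beta'} - O(1)\bigr) \le \exp\bigl(-c\, N\, L^{\zeta''}\bigr) = \exp\bigl(-c\, L^{(\beta-\beta'')(d-1) + \zeta''}\bigr),
\]
valid after sending $\beta' \uparrow \beta$. Hence I recover (\ref{renormalizationEst}) with exponent $f_0(\beta'') + (\beta-\beta'')(d-1)$ for any choice of $\beta''\in[\beta_0,\beta)$. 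Iterating this construction—feeding the resulting exponent back as a new admissible input and letting $\beta''$ slide upward toward $\beta$—should upgrade the achievable exponent toward the linear interpolation between $f_0(\beta_0)$ at $\beta_0$ and $d$ at $\beta=1$. The monotonicity hypothesis $f_0(\beta)\ge f_0(\beta_0)+\beta-\beta_0$ is precisely what keeps the iteration productive, since it guarantees that improved inputs feed back strictly better outputs.

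The main obstacle I anticipate is matching the exponent exactly to $f(\beta)$: a single pass of the argument above realises only the slope $d-1$ in the $\beta$-direction, whereas $f$ has slope $(d-f_0(\beta_0))/(1-\beta_0)$, which will generally exceed $d-1$ when $f_0(\beta_0) < 1$. Closing this gap requires a careful inductive scheme built on a vanishing increment $\beta-\beta''$, together with a boundary analysis at $\beta=1$ that extracts the full $d$ from the fact that the transverse cross-section becomes genuinely $(d-1)$-dimensional of full linear size $L$. Quantifying the accumulated ellipticity loss $\kappa^{c(d)}$ along the iteration, and keeping it strictly subleading relative to $L^\zeta$ for every $\zeta < f(\beta)$, is where I expect the bookkeeping to be heaviest.
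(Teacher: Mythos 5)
The paper does not prove this lemma; it is quoted verbatim from Sznitman \cite{Sz-01} (Lemma 3.2 there), so your proposal has to be judged as a reconstruction of that argument. The central step of your reconstruction is false. You claim the pathwise domination $X_{\beta,L}(0) \le \min_{i} X_{\beta'',L}(w_i) + c(d)\log(1/\kappa)$, on the grounds that any $x \in B_{1,\beta,L}(0)$ can enter the best sub-box ``in a bounded number of steps''. But $X_{\beta,L}(0)$ is defined through an infimum over \emph{all} starting points in a slab of transverse extent $L^{\beta}$, and a starting point at transverse distance of order $L^{\beta}$ from the minimizing sub-box can only reach it at ellipticity cost $\kappa^{cL^{\beta}}$, i.e.\ at an additive cost $cL^{\beta}\log(1/\kappa)$ in $X$, which swamps the target threshold $L^{\beta'}$ since $\beta' < \beta$. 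The domination that \emph{is} elementary is $X_{\beta,L}(0) \lesssim \max_i X_{\beta_0,L}(w_i)$ (each starting point uses the sub-box containing it), and that turns your intersection into a union: it yields only $\P(X_{\beta,L}(0)\ge L^{\beta'}) \le N\,\P(X_{\beta_0,L}(0)\ge L^{\beta'})$, a polynomial loss rather than an exponential gain. The most one can salvage along your lines is that the event forces all sub-boxes within transverse distance $O(L^{\beta'}/\log(1/\kappa))$ of the worst starting point to be bad, which caps the number of independent factors at $L^{(d-1)(\beta'-\beta'')}$ rather than $L^{(d-1)(\beta-\beta'')}$.

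Even granting a product bound, the exponent you reach is of the form $f_0(\beta'') + (d-1)(\beta-\beta'')$ (or at best, using the hypothesis $f_0(\beta)\ge f_0(\beta_0)+\beta-\beta_0$ to evaluate the per-box decay at level $\beta'$ instead of $\beta''$, of the form $f_0(\beta_0)+d(\beta-\beta_0)$), i.e.\ a line of slope at most $d$ in $\beta$. The target $f$ has slope $(d-f_0(\beta_0))/(1-\beta_0)$, which strictly exceeds $d$ whenever $f_0(\beta_0) < d\beta_0$ --- exactly the regime in which the lemma is applied in this paper, where $f_0(\beta_0)=\varepsilon$ and $\beta_0=(1+\gamma)^{-1}+\varepsilon$. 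Your proposed remedy (iterating with $\beta''$ sliding up to $\beta$) cannot close this gap: composing affine gains of slope at most $d$ again produces slope at most $d$, and counting \emph{disjoint} translates of the fixed sub-box shape $L\times(L^{\beta_0})^{d-1}$ inside the $\beta=1$ box gives only $L^{(d-1)(1-\beta_0)}$ factors, so the endpoint value $d$ at $\beta=1$ is unreachable by transverse parallelism alone when $f_0(\beta_0)$ is small. The missing mechanism in Sznitman's argument is that a single crossing with $-\log$(exit probability) of order $L^{\beta'}$ must encounter of order $L^{\beta'-\beta_0'}$ bad sub-boxes (a count coming from the ratio of the log-cost thresholds along the trajectory, not from the geometry of one transverse slice); it is the combination of this count with the transverse entropy, organized as a multi-scale induction, that produces the full interpolation slope. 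Your proposal does not engage with that step, and without it the stated conclusion does not follow.
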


\subsection{Seed estimate under condition $(T)_\gamma$}
To prove Lemma \ref{anomalousExitDist} we will apply
 Lemma \ref{renormalizationLemma}. But we need to find an optimal function $f_0$
for which the assumption of this lemma are satisfied. That is the content of 
the so called seed estimate,  Lemma \ref{seedEstimateLemma}, which we will
prove in this subsection. This result is analogous to Lemma 3.3 of
\cite{Sz-01}, which assumes $(T')$ and in turn relies on Lemma 2.3 of the
same paper which gives a control for the annealed probability for the
fluctuations of the projection on the orthogonal complement of $\hat v$ of the
walk. Since we will instead only assume
condition $(T)_\gamma$, we need some control analogous to Lemma 2.3 of
\cite{Sz-01}. For completeness, we state such result, which was also proved by
Sznitman, as Theorem A.2 in \cite{Sz-02}. First we introduce as in \cite{Sz-02}, for
$z\in\Z^d$ the following notation for the orthogonal projection on the
orthogonal subspace of $\hat v$

$$
\pi(z):=z-z\cdot\hat v \hat v,
$$
and for $u\in\R$ and $l\in\mathbb S^{d-1}$,
 the last visit of $X_n$ to $\{x\in\Z^d:l\cdot x\le u\}$ is denoted by

$$
L_u^l:=\sup\{n\ge 0: X_n\cdot l\le u\}.
$$

\medskip

\begin{thm} [Sznitman, \cite{Sz-02}] 
\label{projectionControl} Assume that for some $\gamma\in (0,1]$, $(T)_\gamma$
holds with respect to $l\in\mathbb S^{d-1}.$ Then for any $c>0$,
$\rho\in (0.5,1]$,

$$
\limsup_{u\to\infty}u^{-(2\rho-1) \wedge \gamma\rho}\log P_0\Big(
\sup_{0\le n\le L_u^l} |\pi (X_n)|\ge c u^\rho\Big)<0.
$$

\end{thm}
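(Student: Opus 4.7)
The plan is to decompose the path up to $L_u^l$ using the regeneration structure in direction $l$ and to reduce the estimate to a concentration inequality for i.i.d.\ centred vectors with stretched-exponential tails. Under $(T)_\gamma\vert l$, condition (ii) of the characterisation recalled after (\ref{finiteExponExpect}) provides regeneration times $\{\tau_k\}_{k\ge 1}$, $P_0$-a.s.\ finite, such that $(X_{\tau_{k+1}}-X_{\tau_k},\tau_{k+1}-\tau_k)_{k\ge 1}$ are i.i.d.\ with $E_0[\exp(c|X_{\tau_1}|^\gamma)]<\infty$. Combining the annealed law of large numbers with (\ref{asDir}) forces the asymptotic velocity to be a positive multiple of $\hat v$, so the vectors $Y_k:=\pi(X_{\tau_{k+1}}-X_{\tau_k})$ are centred for $k\ge 1$.

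The next step is to reduce the supremum to a controllable number of regeneration increments. Since $X_{\tau_k}\cdot l$ is a renewal sum with positive drift and positive increments, the number $N_u$ of regenerations before $L_u^l$ satisfies $P_0(N_u\ge Au)\le e^{-cu}$ for large enough $A$, by a standard Cram\'er lower-tail estimate. Splitting
\begin{equation}
\nonumber
\sup_{0\le n\le L_u^l}|\pi(X_n)|\le\max_{1\le k\le N_u}|\pi(X_{\tau_k})|+\max_{0\le k\le N_u}\sup_{\tau_k\le n<\tau_{k+1}}|X_n-X_{\tau_k}|,
\end{equation}
the block-fluctuation term is controlled by the tail bound $P_0(\sup_{0\le n<\tau_1}|X_n|\ge t)\le c_3 e^{-c_4 t^\gamma}$ together with a union bound over at most $Au$ blocks, yielding a bound of order $e^{-c'u^{\rho\gamma}+o(u^{\rho\gamma})}$ on the event that this maximum is at least $cu^\rho/2$.

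The heart of the argument is a sub-Weibull Bernstein-type inequality for the partial sums $S_K:=\sum_{k=1}^K Y_k$. Truncating each $Y_k$ at level $\eta u^\rho$, applying a Bernstein bound to the truncated centred martingale sum (whose cumulative conditional variance is $O(K)$), and handling the atypically large, untruncated increments by a union bound yields
\begin{equation}
\nonumber
P_0(|S_K|\ge s)\le c_5\exp\Big(-c_6\min\Big(\tfrac{s^2}{K},\,s^\gamma\Big)\Big).
\end{equation}
A further union bound over $K\le Au$ introduces only a logarithmic overhead in the exponent. Plugging in $K=Au$ and $s=cu^\rho/2$ produces $\exp(-c_7\min(u^{2\rho-1},u^{\gamma\rho}))$, which is precisely the exponent $(2\rho-1)\wedge\gamma\rho$ claimed. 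Combining this with the two previous bounds gives the announced estimate.

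The main obstacle is the sub-Weibull Bernstein step and the subsequent passage to a running maximum: one has to balance the sub-Gaussian regime $s^2/K$ (typical fluctuations of the centred sum) against the heavy-tail regime $s^\gamma$ (a single anomalously large increment dominates), and check that all polynomial or logarithmic overheads absorb cleanly into the $o(\cdot)$ terms without degrading the rate $u^{(2\rho-1)\wedge\gamma\rho}$. Everything else is routine bookkeeping between the three sources of error.
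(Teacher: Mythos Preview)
The paper does not give its own proof of this theorem; it is quoted verbatim as Theorem~A.2 of \cite{Sz-02} and merely stated for completeness. Your sketch is correct and follows essentially the same route as Sznitman's original argument there: use the regeneration structure in direction $l$, observe that the increments $\pi(X_{\tau_{k+1}}-X_{\tau_k})$ are i.i.d.\ centred with stretched-exponential tails (the centring relies only on $E_0[X_{\tau_2}-X_{\tau_1}]$ being parallel to $\hat v$, which follows from the SLLN for the regeneration increments together with (\ref{asDir}) and does \emph{not} presuppose ballisticity), control the number $N_u$ of regenerations by a Cram\'er lower-tail bound, and finish with a truncation--Bernstein (Fuk--Nagaev type) inequality yielding the two-regime exponent $\min(s^2/K,\,s^\gamma)$. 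Plugging in $K\asymp u$ and $s\asymp u^\rho$ gives exactly $(2\rho-1)\wedge\gamma\rho$, and the polynomial overheads from the various union bounds are harmless.
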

\noindent Now, with the help of Theorem \ref{projectionControl}, we will prove the
following lemma, following closely the proof of Lemma 3.3 of \cite{Sz-01}. We
also include the whole proof in the paper for completeness.

\begin{lem} \label{seedEstimateLemma}
Let $\gamma\in (0,1)$ and assume that condition
 $(T)_\gamma|\hat v$ is satisfied. Then, for each
$\beta_0 \in (1/2,1)$, we have that for every $\rho>0$  and $\beta \in [\beta_0,1)$
\begin{equation} \label{seedEstimate}
\limsup_{L \to \infty} L^{-(\beta + \beta_0 - 1) \wedge \gamma \beta_0} \log
\P( X_{\beta_0, L} \geq \rho L^\beta) < 0.
\end{equation}
\end{lem}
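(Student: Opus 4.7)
The plan is to adapt the argument of Sznitman's Lemma 3.3 in \cite{Sz-01} to the present setting by substituting his Lemma 2.3, which controls the annealed orthogonal fluctuations of the walk under $(T')$, by Theorem \ref{projectionControl}, the corresponding control under our weaker hypothesis $(T)_\gamma$. The new $\gamma \rho$-term in the rate of Theorem \ref{projectionControl} (absent in \cite{Sz-01}) is precisely what produces the $\gamma\beta_0$ ceiling in the conclusion; the $(\beta + \beta_0 - 1)$ part of the rate is inherited from Sznitman's original mechanism.

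First I would use a union bound over starting points to reduce the claim to a uniform-in-$x$ estimate: since $|B_{1, \beta_0, L}(0)|$ is polynomial in $L$, this factor is absorbed by the target stretched-exponential bound, and it suffices to prove
$$\limsup_{L \to \infty} L^{-(\beta + \beta_0 - 1) \wedge \gamma \beta_0} \log \P\big(P_{x, \omega}(X_{T_{B_2}} \in \partial^* B_2) \leq e^{-\rho L^\beta}\big) < 0$$
uniformly in $x \in B_{1, \beta_0, L}(0)$, where $B_2 := B_{2, \beta_0, L}(0)$. Next I would decompose the corresponding annealed bad-exit event according to the exit face: the backward exit of $B_2$ is controlled by Lemma \ref{gamma-decay} applied at scale $L^{\beta_0}$, contributing $\exp(-c L^{\gamma \beta_0})$, while the exit through a side face, or through the front with transverse coordinate outside $[0, L^{\beta_0}]^{d-1}$, is handled by Theorem \ref{projectionControl} with the $\beta$-adjusted parameters
$$u := L^{1 - (\beta - \beta_0)}, \qquad \rho := \frac{\beta_0}{1 - (\beta - \beta_0)}.$$
The constraints $\beta_0 \in (1/2, 1)$ and $\beta \in [\beta_0, 1)$ guarantee $\rho \in (1/2, 1]$ and $u \in [L^{\beta_0}, L]$, and with these choices $c u^\rho = c L^{\beta_0}$ (matching the transverse extent of $B_2$) while $u^{(2\rho - 1) \wedge \gamma \rho}$ equals $L^{(\beta + \beta_0 - 1) \wedge \gamma \beta_0}$, exactly as required.

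The principal technical obstacle is the passage from the quenched event $\{P_{x,\omega}(X_{T_{B_2}} \in \partial^* B_2) \leq e^{-\rho L^\beta}\}$ to an annealed estimate bearing the claimed $\beta$-dependent rate, since a crude Markov-type inequality would yield only a $\beta$-independent rate. Extracting the sharper dependence requires Sznitman's refined technique, which ties the atypical quenched smallness to an identifiable trajectory feature at scale $u = L^{1 - (\beta - \beta_0)}$ and then feeds this into Theorem \ref{projectionControl} at the tuned parameters above. A secondary subtlety is that Theorem \ref{projectionControl} controls $\sup|\pi(X_n)|$ only up to the last-visit time $L_u^{\hat v}$ rather than up to the exit time $T_{B_2}$; when the walk exits $B_2$ at a longitudinal level exceeding $u$, the post-crossing segment of the trajectory must be handled separately, via the strong Markov property together with a further application of Theorem \ref{projectionControl} (or of Lemma \ref{gamma-decay}). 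This bookkeeping, paralleling \cite{Sz-01}, is the technical heart of the argument.
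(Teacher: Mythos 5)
Your proposal follows essentially the same route as the paper: it adapts Sznitman's Lemma 3.3 of \cite{Sz-01} with his Lemma 2.3 replaced by Theorem~\ref{projectionControl}, and your tuning $u = L^{1-(\beta-\beta_0)}$, $\rho = \beta_0/(1-(\beta-\beta_0))$ is exactly the paper's choice (longitudinal sub-box length $L_0$, transverse extent $\eta L^{\beta_0}$), correctly producing the rate $L^{(\beta+\beta_0-1)\wedge\gamma\beta_0}$. The one step you leave as a black box --- the ``identifiable trajectory feature'' behind the atypically small quenched exit probability --- is, in the paper, the good/bad dichotomy: one of the $\floor{L^{\beta-\beta_0}}$ points $jL_0e_1$ must be \emph{bad} (quenched front-exit probability of its tube $\tilde B_2(jL_0e_1)$ below $1/2$ for some starting point in $\tilde B_1(jL_0e_1)$), since otherwise chaining via the strong Markov property and uniform ellipticity gives a quenched success probability at least $\big(\tfrac12 \kappa^{c(d)\eta L^{\beta_0}}\big)^{L^{\beta-\beta_0}}\kappa^{2c(d)\eta L^{\beta_0}} > e^{-\rho L^\beta}$ for $\eta$ small, and Markov's inequality at the $O(1)$ threshold $1/2$ then converts a single bad point into precisely the two annealed estimates you cite.
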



\begin{proof}
We proceed as in the proof of Lemma 3.3 of \cite{Sz-01}, taking advantage of
Theorem \ref{projectionControl}.
Define $\chi := \beta_0 + 1 - \beta \in (\beta_0,1]$,
\begin{equation}
\nonumber
L_0 := \frac{L - \eta L^{\beta_0}}{\floor{L^{1-\chi}}}
\end{equation}
as well as
$$
\tilde{B}_1(w) :=\big\{x\in \Z^d:x\in
 \hat{R}([0,L_0] \times [0,L^{\beta_0}]^{d-1})\big\}
$$
and
$$
\tilde{B}_2(w) := \big\{x\in\Z^d:x\in\hat{R}((-dL^{\beta_0},L_0] \times (-\eta L^{\beta_0},(1+\eta)L^{\beta_0})^{d-1})\big\}
$$
for $w \in \Z^d$ and $\eta > 0.$
Keeping to the notation of \cite{Sz-01} we say that a point $w \in \Z^d$
is {\it bad} if
$$
\inf_{x\in\tilde{B}_1(w)}P_{x,\omega} (X_{T_{{\tilde{B}}_2 (w)}} \in \partial_+ \tilde{B}_2(w)) < 1/2
$$
 and {\it good} otherwise.
 Here
$\partial_+ \tilde{B}_2$ is defined as in (\ref{bdPosPart}).
By Chebyshev's inequality
\begin{equation} 
\label{firstWBadEst}
\P( w\text{ is bad})
\leq  2^{d+1} L_0 L^{(d-1)\beta_0}
\Big( P_0 \big (\sup_{0 \leq n \leq T_{L_0}^{\hat{v}}} \vert \pi (X_n) \vert \geq \eta L^{\beta_0} \big)
+ P_0(T^{-\hat{v}}_{dL^{\beta_0}} < \infty) \Big),
\end{equation}
where for  $v \in \R^d$ and $L \in \R$ we employed the stopping time
$
T_L^v := \inf \{ n \in \N : X_n \cdot v \geq L\}.
$
The first summand can now be estimated via Theorem  \ref{projectionControl}. This yields
\begin{equation}
\label{sideExitEst}
0  > \limsup_{L \to \infty} L^{-(2\beta_0 - \chi) \wedge \gamma \beta_0}
\log P_0 \big(\sup_{0 \leq n \leq T_{L_0}^{\hat{v}}} \vert \pi(X_n) \vert \geq \eta L^{\beta_0} \big).
\end{equation}
The second summand is estimated as in \cite{Sz-01} yielding due to
 (\ref{finiteExponExpect}) that

\begin{equation} \label{backwardExitEst}
\limsup_{L \to \infty} L^{-\gamma \beta_0} \log P_0 (T_{dL^{\beta_0}}^{-\hat{v}} < \infty) < 0
\end{equation}
Inserting the definition of $\chi,$ (\ref{sideExitEst}), 
(\ref{backwardExitEst}) and (\ref{firstWBadEst})  gives the estimate
\begin{equation} \label{wBadEst}
\limsup_{L \to \infty} L^{-(\beta + \beta_0 -1) \wedge \gamma \beta_0} \log P_0(w \text{ is bad}) < 0.
\end{equation}

We now consider a certain set of trajectories starting in $B_{1, \beta, L}(0)$ and leaving
$B_{2, \beta, L}(0)$ via $\partial^* B_{2, \beta, L}(0).$ We then show that if the points
$
j L_0 e_1,
$
$j \in \{0, \dots, \floor{L^{1-\chi}}\}$
are all good, the above set of trajectories has a probability larger than
$
e^{-\rho L^\beta}
$
to occur, hence it remains only to estimate the probability that one of the $jL_0 e_1$ is bad in an adequate
way.

Now to describe the above mentioned set of trajectories consider a walk starting 
in $B_{1, \beta_0, L}(0) \cap \tilde{B}_1 (j_0 L_0 e_1),$ some $j_0 \in \{0, \dots, \floor{L^{1-\chi}} -1\}$
and let it leave $\tilde{B}_2(j_0 L_0 e_1)$ through $\partial_+ \tilde{B}_2(j_0 L_0 e_1).$ From this point
of exit, the walk can reach $\tilde{B}_1((j_0+1)L_0 e_1)$ within $c(d) \eta L^{\beta_0}$ steps and
stay within $\tilde{B}_2((j_0 + 1) L_0 e_1)$ along this way. We then assume the walk to exit
$\tilde{B}_2((j_0 + 1) L_0 e_1)$ in the same way as $\tilde{B}_2(j_0 L_0 e_1),$ return to the box
$\tilde{B}_1 ((j_0+2) L_0 e_1)$ in the same way as before to $\tilde{B}_1 ((j_0+1) L_0 e_1)$ and so on.
When reaching $\partial_+ \tilde{B_2} (( \floor{L^{1-\chi}} -1) L_0 e_1),$ we want the walk to
enter $\tilde{B}_1 ( \floor{L^{1-\chi}} L_0 e_1)$ without leaving
$
B_{2, \beta_0, L} (0) \cap \tilde{B}_2 ( \floor{L^{1-\chi}} L, e_1)
$
and then exit $B_{2, \beta_0, L}(0)$ through $\partial^* B_{2, \beta_0, L}(0).$
These two requirements can be met within $2c(d) \kappa^{L^{\beta_0}}$ steps.

Now assume that the points $jL_0 e_1,$ $j \in \{0, \dots, \floor{L^{1-\chi}}-1\}$ are all good.
The strong Markov property applied to each of the exit and entrance times of the trajectories
described above, yields
\begin{align*}
P_{x,\omega} (X_{T_{B_{2, \beta_0, L}(0)}} \in \partial_+ B_{2, \beta_0, L}(0))
&\geq \Big( \frac12 \kappa^{c(d) \eta L^{\beta_0}} \Big)^{L^{1-\chi}} \kappa^{2c(d) \eta L^{\beta_0}}\\
& > \exp\{-\rho L^\beta\}
\end{align*}
for $\eta > 0$ small enough and all $x \in B_{1, \beta_0, L}(0).$
Thus, translation invariance of the environment yields
$
\P( X_{\beta_0, L} \geq \rho L^\beta) \leq L^{1-\chi} \P(0 \text{ is bad}),
$
which in combination with (\ref{wBadEst}) finishes the proof.
\end{proof}

\subsection{ Proof of Lemma \ref{anomalousExitDist}}
For $\varepsilon >0$ small enough we define
$f_{0, \varepsilon} : [(1+\gamma)^{-1} + \varepsilon, 1) \to [0,1]$ by

$$
f_{0,\varepsilon}(\beta):= \beta - (1+\gamma)^{-1}.
$$
Then, by Lemma \ref{seedEstimate}  the assumptions of Lemma \ref{renormalizationLemma}
are satisfied for $\beta_0 := (1+\gamma)^{-1} + \varepsilon$ and $f_0 := f_{0, \varepsilon}.$
Therefore, defining 
$f_\varepsilon: [(1+\gamma)^{-1} + \varepsilon, 1) \to [0,1]$ as
the linear interpolation between the value $\varepsilon$ at $(1+\gamma)^{-1} + \varepsilon$
and the value $d$ at $1,$ i.e.	
$$
f_\varepsilon(\beta):= d \Big( \beta - \frac{1}{1+\gamma} - \varepsilon \Big)
\frac{(1+\gamma)\left(1-\frac{\varepsilon}{d}\right)}{\gamma - \varepsilon - \gamma \varepsilon} + \varepsilon,
$$
by Lemma \ref{renormalizationLemma} we can see  that for $\beta \in [(1+\gamma)^{-1} + \varepsilon, 1)$
and $\zeta < f_\varepsilon (\beta),$ (\ref{renormalizationEst}) holds.

Using the strong Markov property applied at $T_{B_{2,\beta,L}(0)}$ we obtain for all $\beta \in (0,1)$
and $L > 0$ large enough
$$
P_{0,\omega} (X_{T_{B_{2, \beta, L}(0)}} \in \partial^* B_{2, \beta, L}(0) )
\leq P_{0,\omega} (X_{T_B} \in \partial_+ B) \cdot \kappa^{-2c(d)}
$$
and hence in combination with the previously established version of (\ref{renormalizationEst}),
for any $\beta \in [(1+\gamma)^{-1} + \varepsilon, 1)$
and $\zeta < f_\varepsilon (\beta),$
\begin{align*}
&\limsup_{L \to \infty} L^{-\zeta} \log \P(P_{0,\omega} (X_{T_B} \in \partial_+ B) \leq e^{-c L^{\beta}})\\
&\leq \limsup_{L \to \infty} L^{-\zeta} \log \P\left(X_{\beta,L}(0) \geq c L^{\beta} + 2c(d) \log \kappa \right)\\
&= -\delta_2 < 0.
\end{align*}
\noindent Letting $\varepsilon \downarrow 0,$
this proves Lemma \ref{anomalousExitDist}.

\medskip

\noindent {\bf Acknowledgement.} We  thank  A.-S. Sznitman for introducing this
topic to us and for useful discussions.


\def\polhk#1{\setbox0=\hbox{#1}{\ooalign{\hidewidth
  \lower1.5ex\hbox{`}\hidewidth\crcr\unhbox0}}}

\end{document}